%
%  New version with ``mean convex blow-up type", etc.  
%  BW will post on ArXiV and submit to GAFA today (Feb 3, 2018).
%
% Posted on ArXiV.
%  Submitted to Duke Math Journal on June 19, 2017.
%  I (bw) modified the file (Nov 12, 2017) by adding the number of my Simons Foundation grant.
%  I changed the date from April 21 to April 22.
%
\documentclass{amsart}
\usepackage{chngcntr}
\usepackage{apptools}
\usepackage{color}
\usepackage{amsthm,amssymb,verbatim}
\usepackage{graphicx}
\usepackage{enumerate}
%\usepackage{showlabels}
% genus-one helicoids from a variational point of view
\usepackage{enumitem}
\usepackage{chngcntr}
\usepackage{apptools}
\usepackage{color}
\usepackage{amsthm,amssymb,verbatim}
\usepackage{graphicx}
\usepackage{enumerate}
\newcommand{\Sing}{\operatorname{Sing}}

 %space of annuli

 \newcommand{\Cc}{\mathcal C}
 
 \newcommand{\Nn}{\mathcal{N}}

  \newcommand{\MM}{\mathcal{M}}
  
\newcommand{\Pp}{\mathcal{P}}

 \newcommand{\RR}{\mathbf{R}}  % reals
   % integers
 \newcommand{\BB}{\mathbf{B}}  %ball
    %hyperbolic space
    %cylinder
   %circle, sphere
   %complex numbers
    
  \newcommand{\Div}{\operatorname{Div}}
  
    \newcommand{\dist}{\operatorname{dist}}

 \newcommand{\eps}{\epsilon}

 \newcommand{\UU}{\mathcal{U}}
 \newcommand{\VV}{\mathcal{V}}

\newcommand{\vv}{\mathbf v}

\newcommand{\spt}{\operatorname{spt}}
\newcommand{\Hh}{\mathcal{H}}
\usepackage{amsthm}

\newcommand{\interior}{\operatorname{interior}}
\input epsf
\def\begfig {
\begin{figure}
\small }
\def\endfig {
\normalsize
\end{figure}
}

%\swapnumbers
    \newtheorem{theorem}    {Theorem}   %    [section]
    \newtheorem{lemma}      [theorem]       {Lemma}
    
    \newtheorem{proposition}       [theorem]       {Proposition}
    
    \newtheorem{claim}{Claim}
        \newtheorem*{claim*}{Claim}
    \newtheorem*{theorem*}{Theorem}
    \theoremstyle{definition}
    \newtheorem{definition}  [theorem] {Definition}
     
    \theoremstyle{definition}
    \newtheorem{remark}   [theorem]       {Remark}
%\parindent=0pt
%\parskip=12pt
%\usepackage{hyperref} 
%\usepackage[alphabetic, msc-links, backrefs]{amsrefs}
%\setlength{\baselineskip}{1.2\baselineskip}
%  I JUST COMMENTED THIS OUT -- LET'S SEE.  HERE

%\renewcommand{\thesubsection}{\thetheorem}{\thedefinition}
   % this is so subsections and theorems, etc will be
   % numbered together.
\title[Non-fattening of mean curvature flow]{Non-fattening of mean curvature flow at singularities of mean convex type}
%\title{The geometry of limit laminations of embedded minimal disks in rotationally symmetric
\author{Or Hershkovits}
\thanks{The first author was partially supported by an AMS-Simons Travel Grant}
\address{Department of Mathematics\\ Stanford University\\ Stanford, CA 94305}
\email{orher@stanford.edu}
\author{Brian White}
\thanks{The second author was partially supported by grants from the Simons Foundation
(\#396369) and from the National Science Foundation (DMS~1404282, DMS~ 1711293).}
\address{Department of Mathematics\\ Stanford University\\ Stanford, CA 94305}
\email{bcwhite@stanford.edu}
\subjclass[2010]{Primary 53C44; Secondary 49Q20.}
%\keywords{Mean curvature flow, mean convex, singularity}
\date{February 3, 2018}
\usepackage{hyperref}
\usepackage[alphabetic, msc-links, backrefs]{amsrefs}
%
% IMPORTANT: These two usepackage commands MUST occur
% in this order at the very end of the preamble.
%
\begin{document}
\maketitle
\begin{abstract}
We show that a mean curvature flow starting from a 
compact, smoothly embedded hypersurface $M\subseteq  \RR^{n+1}$ remains unique past singularities, provided the
 singularities are
 of mean convex type, i.e., if around each singular point,
  the surface moves in one direction. 
Specifically, the level set flow of $M$ does not fatten if all singularities are of mean convex type.  We further show that assumptions of the theorem hold provided all blow-up flows are of the kind appearing in a mean convex flow, i.e. smooth, multiplicity one and convex.
Our results generalize the well known fact that the level set flow of a mean convex initial hypersurface $M$ does not fatten.  They also provide the first instance where non-fattening is concluded from local information around the singular set or from information about the singularity profiles of a flow. \end{abstract}
\section{Introduction}
It is an old idea in geometric analysis, and PDEs in general, to separate the questions of existence and regularity; one is often led to defining a weak notion of solution, the existence of which can be shown by one set of ideas, while studying its properties may require different methods. In the study of mean curvature flow, one very 
useful notion of  weak solution is that of the level set flow, 
introduced numerically in \cite{OsherSeth} 
and developed rigorously in \cites{evans-spruck,CGG}.
Given a closed set $X\subseteq  \RR^{n+1}$, its level set flow 
$t\in [0,\infty)\mapsto F_t(X)$ is a 
one-parameter family of closed sets starting at $F_0(X)=X$ and 
 satisfying the avoidance principle: $F_t(X)\cap M(t)=\emptyset$, provided 
 $t\in[a,b]\mapsto M(t)$ is a smooth mean curvature flow 
 with $[a,b]\subseteq[0,\infty)$ and with $M(a)\cap F_a(X)=\emptyset$. 
 Indeed, the level set flow is fully characterized as the maximal family of sets  satisfying the two properties above \cites{Ilmanen_LS,Ilmanen,white_topology}. 
Ideally, weak solutions should coincide with smooth solutions whenever the latter exist. 
In our case, if $t\in [0,T) \mapsto M(t)$ is a smooth mean curvature flow of 
closed, embedded hypersurfaces in $\RR^{n+1}$, then 
 $F_t(M_0)=M(t)$ for every $0\leq t<T$, as was shown in \cites{evans-spruck,CGG}. 
Although in many regards the level set flow resembles mean curvature flow of smooth surfaces, 
it was observed already in the original paper \cite{evans-spruck} that if $X$ is a smooth closed
planar curve that crosses itself, then $F_t(X)$ will instantly develop an interior.
In general, if the interior of $F_t(X)$ is empty for $t=0$ and nonempty at some later time, 
we say that $X$ fattens under the level set flow.
Even if the initial hypersurface is smooth and embedded, 
fattening can occur after the surface becomes singular, as described in \cite{White_ICM}. 
Although the level set flow is unique, 
the fattening phenomenon is related to non-uniqueness for other weak formulations of mean curvature
flow.  For example, let $M\subset\RR^{n+1}$ be a smooth, closed hypersurface.  Let $U$ be the compact
region it bounds.  Then 
\begin{equation}\label{eq:three}
\begin{aligned}
&t\in [0,\infty) \mapsto M_{\rm outer}(t) := \partial F_t(U), \\
&t\in [0,\infty) \mapsto M_{\rm inner}(t):= \partial F_t(\overline{U^c}),\, \text{and} \\
&t\in [0,\infty) \mapsto F_t(M)
\end{aligned}
\end{equation}
all may be 
regarded
as weak versions of mean curvature flow starting from $M$.
In particular, if the flow $M_{\rm inner}(\cdot)$ or $M_{\rm outer}(\cdot)$ is smooth in some region of spacetime, then it is indeed ordinary mean curvature flow in that region.  
If $F_t(M)$ has interior,
then it differs from $M_\textnormal{inner}(t)$ and $M_\textnormal{outer}(t)$, since
neither of those sets has interior.  One can also show, in this case, that 
   $M_\textnormal{inner}(t)\ne M_\textnormal{outer}(t)$.
 Thus if $M$ fattens, then $F_t(M)$, $M_\textnormal{inner}(t)$ 
 and $M_\textnormal{outer}(t)$ are three distinct flows.
 
 (The flow $t\mapsto \partial F_t(U)$ is  somewhat awkward to work with
because it need not trace out a closed subset of spacetime.  For this reason, it 
is actually better to define
  $M_\textnormal{outer}(t)$ to be $\lim_{\tau\uparrow t} \partial F_\tau(U)$.
The two definitions agree except at a countable set of times;
see Theorems~\ref{two-defs-theorem} and~\ref{countable-times-theorem}.
The same remarks apply to $M_\textnormal{inner}(\cdot)$.)
   
Extending the work of Brakke, Ilmanen introduced a notion of ``matching Brakke flow"~\cite{Ilmanen}.
He proved that if $M$ is non-fattening, then there is a unique matching Brakke flow
$t\mapsto M(t)$ with $M(0)=M$.  We conjecture that the converse is true.
(Indeed, we believe that the flows $M_{\rm inner}(\cdot)$ and $M_{\rm outer}(\cdot)$
both can be identified with matching Brakke flows.)  If this conjecture is true,
nonfattening of level set flow would be equivalent to uniqueness for matching Brakke flow.

In addition to being a fundamental question about the nature of the flow, whether or not fattening occurs is extremely relevant to the regularity theory for MCF and for potential geometric applications. From the regularity theory point of view, non-fattening ensures that the assumptions of Brakke regularity theorem \cite{Bra} hold at almost every time, and, in particular, that almost every time slice of a non-fattening MCF is regular $\mathcal{H}^n$ almost everywhere. From the point of view of geometric applications, non-fattening corresponds to continuous dependence on initial data, which is key if one wants to apply weak MCF on families of initial configurations in order to study the space of embedded hypersurfaces of a fixed topological type (compare \cite{Bam_Kle}). 

In light of the above, it is desirable to find conditions that prevent fattening. 
We have already mentioned that a smooth hypersurface
cannot fatten until after singularities form.
 Short-time non-fattening for initial sets satisfying a Reifenberg condition with small Reifenberg parameter was established by the first author in \cites{Her_Reif} (see also \cite{Her_Reif_High} for the higher co-dimension surfaces). 
In that case, the flow immediately becomes smooth (though it may later develop singularities),
and the non-fattening follows from
short-time existence of smooth flows (with suitable estimates) serving as barriers to the level set flow. 
In the presence of singularities, two initial conditions are known to imply non-fattening for all time: 
 the star-shapedness of $M$  \cite{soner_star}  and  mean convexity of $M$ \cite{evans-spruck}.
(See also \cite{white_size} for a more geometric proof that mean convex sets do not fatten.)       
The facts that surfaces can fatten only after they become singular and that mean convex surfaces
never fatten suggest the following conjecture:
\begin{quote}
An evolving surface cannot fatten unless it has a singularity with no spacetime neighborhood in
which the surface is mean convex.
\end{quote}
According to the conjecture, to ensure nonfattening, 
we do not need mean convexity everywhere; it suffices to have it near
the singularities.
In this paper, we prove a precise formulation of the conjecture. 
Additionally, we show that having a spacetime neighborhood in which the surface is mean convex can be concluded from data about all blowup limit flows: if all blowup limit flows at a singular points are smooth, multiplicity one, and convex, then there is a spacetime neighborhood of the point in which the flow is mean convex. In particular, this means that fattening can not occur if all singular points have the blowup behavior described above.

\section{The main result}
Before stating our theorem, we need some definitions.
\begin{definition}
Let $M$ be a compact, smoothly embedded hypersurface. 
The {\bf fattening time} of the level set flow of $M$ is
\begin{equation*}
T_\textnormal{fat}:=\inf\{t>0: \text{$F_t(M)$ has non-empty interior}\}.
\end{equation*}
\end{definition}
As the fattening time is a rather illusive quantity to work with directly, 
we will work with a different quantity which we call the \textit{discrepancy time} and which bounds the fattening time from below.
  
Let $U$ be the compact region bounded by a compact, smoothly embedded hypersurface $M$, and let $U'=\overline{U^c}$. Let $U(t)$ and $U'(t)$ denote the
corresponding level set flows:
\begin{align*}
U(t)&:=F_t(U),
\\
U'(t)&:=F_t(U'),
\end{align*}
and let $\UU$ and $\UU '$ be the space time tracks:
\begin{align*}
\UU &:=\{(x,t)\subseteq \RR^{n+1}\times \RR\;|\;x\in U(t)\},
\\
\UU '&:=\{(x,t)\subseteq \RR^{n+1}\times \RR\;|\;x\in U'(t)\}.
\end{align*}
We let 
\begin{equation}\label{eq:definition-inner-outer}
 \begin{aligned}
 M(t)&:=\{x\in \RR^{n+1}\;|\;(x,t)\in\partial \UU \},
 \\
 M'(t)&:=\{x\in \RR^{n+1}\;|\;(x,t)\in\partial \UU'\}.
 \end{aligned}
 \end{equation}
 (Here $\partial \UU$ and $\partial \UU'$ are the relative boundaries of $\UU$ and $\UU'$
 as subsets of $\RR^{n+1}\times [0,\infty)$.)
 We say that $t\mapsto M(t)$ and $t\mapsto M'(t)$ are the {\bf outer} and {\bf inner}
 flows of $M$.
 
The surfaces $M(t)$ and $\partial U(t)$ are closely related.
Trivially $\partial U(t)\subseteq M(t)$.  (This uses nothing except that $\UU$ is 
a closed subset of spacetime.)  Furthermore, 
\begin{equation*}\label{eq:two-defs}
 M(t) =  \lim_{\tau\uparrow t}\partial U(\tau)  \\
\end{equation*}
for all $t>0$, and $M(t) =  \partial U(t)$ for all but countably many $t$.
  See Theorems~\ref{two-defs-theorem} and~\ref{countable-times-theorem} in
  the appendix.
Then $M(t),M'(t)\subseteq F_t(M)$ for all $t\ge 0$
 (see appendix and Proposition~\ref{inner_most_prop} in particular). The discrepancy time is the first time at which those three flows start to differ.
\begin{definition}
The \textbf{discrepancy time} is
\begin{equation*}\label{eq:start-to-differ}
  T_{\rm disc} 
  = 
  \inf \{ t>0: \text{$M(t)$, $M'(t)$, and $F_t(M)$ are not all equal} \}.
\end{equation*}
\end{definition}
Thus $M(t)=M'(t)=F_t(M)$ for $t\le T_{\rm disc}$.
\begin{remark}\label{fat_vs_disc_0}
One always has $T_\textnormal{fat} \geq T_\textnormal{disc}$. 
Indeed, If $F_t(M)$ has an interior point $x$, 
then by inclusion of evolving balls, we see that $(x,\tau)\in \textrm{interior}(\UU)$ 
for every $\tau>t$  
sufficiently close to $t$. Thus $x\notin M(\tau)$ for such $\tau$, so in particular 
$\tau \geq T_\textnormal{disc}$. 
\end{remark}
We conjecture that $T_\textnormal{fat}=T_\textnormal{disc}$ for 
every smooth initial surface, but proving it would require a major advance in our
knowledge of mean curvature flow regularity. 

We next fix the notion of points of mean convex/mean concave type.
\begin{definition}\label{concave type_def}
Let $t>0$.
A point $x\in M(t)$ 
is called of {\bf mean convex type} (resp.\ {\bf mean concave type})
 if there exists an $\eps>0$ such that for every $t-\eps\leq t_1<t_2\leq t$,
\begin{equation*}
\begin{aligned}
U(t_2)\cap B(x,\eps) &\subseteq  U(t_1)\setminus M(t_1)
\\
(\text{resp.\ }\; 
U(t_1)\cap B(x,\eps) &\subseteq  U(t_2) \setminus M(t_2)).
\end{aligned}
\end{equation*} 
\end{definition}
\begin{remark}
Because the arguments in \cite{white_nature,white_size} are local, 
the regularity results therein hold for flows for which all the singularities occur at mean convex/mean concave points. In particular, the parabolic Hausdorff dimension of the spacetime singular set is at most $n-1$
(By more recent work of Colding and Minicozzi \cite{cold_min}, 
the spacetime singular set even has 
finite $(n-1)$ dimensional parabolic Hausdorff measure.)
If $n<7$, then the tangent flows are shrinking spheres or cylinders, and more general blowups
must be convex and smooth. 
However, none of those results are needed in this paper.
\end{remark}
\begin{definition}
We say that a spacetime point $(x,t)$ with $t>0$ is  {\bf regular} for the flow $t\in[0,\infty)\mapsto M(t)$ 
if there is an $\eps>0$ such that 
\[
\tau\in [t-\eps,t+\eps] \mapsto \BB(x,\eps)\cap M(t)
\]
is a smooth
mean curvature flow of smoothly embedded hypersurfaces; if there is no such $\eps$, we say that
$(x,t)$ is {\bf singular}.
We say that $(x,t)$ is {\bf backwardly regular} for the flow if there is an $\eps>0$ such that 
\[
\tau\in [t-\eps,t]\mapsto \BB(x,\eps)\cap M(\tau)
\]
is a smooth mean curvature flow of smoothly
embedded hypersurfaces in $\BB(x,\eps)$; if there is no such $\eps$, we say that $(x,t)$ is {\bf backwardly singular}.
\end{definition}
We will also sometimes write ``$x\in M(t)$ is regular (singular, backwardly regular, backwardly singular)" to
mean ``$x\in M(t)$ and 
   $(x,t)$ is a regular (singular, backwardly regular, backwardly singular) point".
We can now state our main theorem:
\begin{theorem}\label{main_theorem}
Let $M\subseteq\RR^{n+1}$ be a compact, smoothly embedded hypersurface. 
If $T_{\textnormal{disc}}<\infty$,
 then there exists a backwardly singular point $x\in M(T_{\textnormal{disc}})$  
 that is neither of mean convex nor of mean concave type.
Equivalently, suppose that $0<T\le T_{\textnormal{disc}}$, and suppose that all the backward singularities at time $T$
are of mean convex or mean concave type.  Then $T<T_\textnormal{disc}$.
\end{theorem}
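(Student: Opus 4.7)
The strategy is a continuation argument. By hypothesis $T \leq T_{\textnormal{disc}}$, so $M(T) = M'(T) = F_T(M)$ as sets. The goal is to show that this equality persists on some short interval $[T, T+\delta]$, which would force $T < T_{\textnormal{disc}}$ and prove the equivalent statement.

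Each point of the compact surface $M(T)$ is either backwardly regular, in which case short-time existence and uniqueness for smooth mean curvature flow furnishes a spacetime neighborhood $\BB(x, r_x) \times [T-r_x, T+r_x]$ on which the flow is smooth (and hence the three flows coincide locally), or backwardly singular, in which case the hypothesis says it is of mean convex or of mean concave type. A finite-cover argument using compactness of $M(T)$ then reduces the problem to the following local claim at each backwardly singular mean convex type point $x$: the strict nesting
\[
U(t_2) \cap \BB(x, \epsilon') \subseteq U(t_1) \setminus M(t_1)
\]
extends to all $T - \epsilon' \leq t_1 < t_2 \leq T + \delta_x$ for some smaller radius $\epsilon'$ and some $\delta_x > 0$. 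The mean concave case is symmetric, applied to $U'$.

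To establish this local extension, one exploits the strict one-sided motion of $U$ on $[T-\epsilon, T]$ inside $\BB(x, \epsilon)$ to construct local barriers. Since $\partial U(t) \cap \BB(x, \epsilon)$ is sweeping strictly inward, smooth mean-convex hypersurfaces can be interpolated between consecutive time slices of $\partial U$ and evolved as smooth mean curvature flows trapping $U(t)$ from both sides. The avoidance principle then propagates this trapping, and hence the strict nesting, to a short interval past $T$. In a slightly smaller ball this strict nesting forces $M(t) = M'(t) = F_t(M)$ for $t \in [T, T+\delta_x]$: the inner and outer flows must agree because $U$ locally moves in a definite direction, and $F_t(M)$ cannot develop interior on a region through which the boundary sweeps monotonically without return.

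The main obstacle is this local propagation step at a backwardly singular mean convex point. Unlike the globally mean convex setting of \cite{evans-spruck} and \cite{white_size}, no single global smooth barrier family is available; the barriers must be engineered purely inside $\BB(x, \epsilon)$ while remaining compatible with the possibly singular behavior of $U$ outside the ball. Ensuring that the constructed barriers stay on the correct side of $\partial U(t)$ throughout $[T-\epsilon, T]$, using only the strict monotonicity and no a priori smoothness of $\partial U(T)$ at the singular point, and then continuing them for a short time beyond $T$ to apply avoidance, is where the technical weight of the argument lies.
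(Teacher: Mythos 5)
Your proposal correctly identifies the overall strategy (continuation past $T$, decomposition into backwardly regular and backwardly singular points, compactness to localize) and correctly isolates the crux of the difficulty. But the crux is exactly where it stops: the sentence ``$F_t(M)$ cannot develop interior on a region through which the boundary sweeps monotonically without return'' is not a lemma you can invoke, it is a restatement of the theorem you are trying to prove. Nothing in the proposal supplies the mechanism that turns monotone sweeping into non-fattening. The barrier construction you sketch does not do this: if you place smooth mean-convex hypersurfaces $\Sigma^{\pm}(\tau)$ just inside and just outside $\partial U(\tau)$ for $\tau<T$ and evolve them by MCF, the avoidance principle keeps them disjoint, but the strong maximum principle pushes them \emph{apart} over time rather than squeezing them together, so the trapped slab does not collapse and you cannot conclude that $M(t)$, $M'(t)$, and $F_t(M)$ agree after $T$. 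Moreover, interpolating ``smooth mean-convex hypersurfaces between consecutive time slices of $\partial U$'' is itself problematic: the time slices are singular at the point in question, and there is no reason the interpolants (or their evolutions) remain smooth or mean-convex inside $\BB(x,\eps)$ for a uniform amount of time.

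The paper takes a genuinely different route. Rather than trapping between barriers, it turns the monotone sweeping into a \emph{time-of-arrival function} $u$ on a spacetime neighborhood of the singular set (well-defined exactly because of the one-sided motion), then interpolates between $f(x,t)=u(x)-t$ (near the singularities) and the signed distance to $M(t)$ (away from them) to build a single continuous function $w$ whose zero set is $M(t)$, whose level sets near the singularities are themselves weak set flows, and which is smooth with nonvanishing gradient elsewhere. The key new ingredient is a criterion (Theorem~\ref{criterion-theorem}): any flow admitting such a function is the level set flow of its initial slice. The proof of that criterion exponentially reweights $w$ to $e^{-\alpha t}w$ to make the super/subsolution inequalities strict and then uses the modified sublevel/superlevel sets, which \emph{are} closed hypersurfaces in all of $\RR^{n+1}$, as global barriers. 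This is what your local-barrier scheme lacks: a way to assemble the local monotonicity into something that constrains the level set flow globally. One smaller gap: you assert that backward regularity of a point gives a smooth spacetime neighborhood $\BB(x,r_x)\times[T-r_x,T+r_x]$, but the outer flow could a priori jump at time $T$; the paper needs a separate result (Theorem~\ref{forward-regularity-theorem}, proved via a unit-regular Brakke flow from elliptic regularization) to upgrade backward regularity to regularity.
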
 
Note that no assumption is made on the behavior at times later than $T_\textnormal{disc}$
(in the first formulation) or than $T$ (in the second formulation). 
An immediate corollary of Theorem \ref{main_theorem} and Remark \ref{fat_vs_disc_0} is the following theorem, which confirms the conjecture appearing in the introduction.
\begin{theorem}\label{no_fat_thm}
Let $M\subseteq\RR^{n+1}$ be a compact, smoothly embedded hypersurface.  Let $T>0$ and suppose that for every $t\in [0,T]$, all the backward singularities of $M(t)$ are of mean convex or mean concave type. Then $T< T_{\textnormal{fat}}$.
\end{theorem}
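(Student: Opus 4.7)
The plan is to argue by contradiction and invoke Theorem~\ref{main_theorem} at the time $T_\textnormal{disc}$. Suppose for contradiction that $T \ge T_\textnormal{fat}$. By Remark~\ref{fat_vs_disc_0} we have $T_\textnormal{fat} \ge T_\textnormal{disc}$, and hence $T \ge T_\textnormal{disc}$; in particular $T_\textnormal{disc}$ is finite. Short-time smooth existence ensures that $T_\textnormal{disc}>0$, so Theorem~\ref{main_theorem} applies and produces a backwardly singular point $x \in M(T_\textnormal{disc})$ that is neither of mean convex nor of mean concave type. Since $T_\textnormal{disc} \in (0,T]$, this point violates the standing hypothesis of Theorem~\ref{no_fat_thm}, giving the desired contradiction.

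Equivalently, one can use the second formulation of Theorem~\ref{main_theorem}: set $T' := T_\textnormal{disc}$, which satisfies $0 < T' \le T_\textnormal{disc}$ and, by hypothesis (since $T' \le T$), has all backward singularities of mean convex or mean concave type. The theorem then yields $T' < T_\textnormal{disc}$, a contradiction with $T' = T_\textnormal{disc}$.

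There is no real obstacle in this deduction; it is genuinely a formal consequence of Theorem~\ref{main_theorem} and Remark~\ref{fat_vs_disc_0}. All the substantive work has already been packaged into the statement that a discrepancy cannot first arise at a time whose backward singularities are all of one-sided (mean convex or mean concave) type, together with the soft observation that fattening forces discrepancy.
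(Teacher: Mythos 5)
Your deduction is correct and is precisely the argument the paper intends when it calls Theorem~\ref{no_fat_thm} an ``immediate corollary'' of Theorem~\ref{main_theorem} and Remark~\ref{fat_vs_disc_0}: from $T\ge T_\textnormal{fat}\ge T_\textnormal{disc}$ one gets $T_\textnormal{disc}\in(0,T]$ finite, and Theorem~\ref{main_theorem} then produces a backwardly singular point at time $T_\textnormal{disc}$ of neither type, contradicting the hypothesis. (A marginally cleaner route is to show $T<T_\textnormal{disc}$ directly by the second formulation and then apply the Remark once at the end, but the content is identical.)
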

 
\begin{remark}
Theorem~\ref{main_theorem}, its proof, and its 
corollary, Theorem ~\ref{no_fat_thm}, remain valid in any smooth ambient Riemannian manifold,
provided we assume that
$F_t(U)$ is compact
for $t\le T_\textnormal{fat}$ (in the first formulation of the theorem)
or for $t\le T$ (in the second formulation).
To ensure such compactness, it is enough to assume that the ambient manifold is complete
with Ricci curvature bounded below: that assumption implies that compact sets remain compact for all time
under the level set flow~\cite{Ilmanen_Ind}.
\end{remark}

The hypothesis that each singularity have mean convex (or mean concave) type may, at first glance, seem 
quite restrictive, in that it requires information about an entire (backward) space time neighborhood of the singularity. However, existence of such a neighborhood follows
from a more infinitesimal hypothesis, namely the hypothesis that all blow-ups (i.e, singularity models)
at the spacetime point are of the kind appearing in a mean convex (or mean concave) flows.

\begin{definition}\label{blow-up-type}
A singular point $(x_0,t_0)$ with $t_0\le T_{\rm disc}$
  is called of \textbf{mean convex blow-up type}  if 
\begin{enumerate}
\item\label{blow-up-type-1}
  All tangent flows at $(x_0,t_0)$ are smooth, multiplicity-one cylinders or spheres.
\item\label{blow-up-type-2}
 Whenever $(x_k,t_k)\rightarrow (x_0,t_0)$ are regular points with $t_k\leq t_0$, then 
 the norm $Q_k:=|A(x_k,t_k)|$ of the second fundamental form of $M(t_k)$ at $x_k$ 
tends to infinity, and, after passing to a subsequence, the flows
\[
   t \in [-Q_k^2\, t_k,0] \mapsto Q_k(U(t_k + Q_k^{-2}t)-x_0) \\
\]
 converge smoothly to a flow $t\in(-\infty,0]\mapsto N_t$ where the $N_t$
 are convex regions with smooth boundaries. 
\end{enumerate}
We say that $(x_0,t_0)$ is of \textbf{mean convave blow-up type} if~\eqref{blow-up-type-1}
and~\eqref{blow-up-type-2} 
hold with $U'$ in place of $U$.
\end{definition}

\begin{theorem}\label{alt}
Suppose that $(x_0,t_0)$ with $t_0\le T_{\rm disc}$ is a singularity of mean convex (resp. mean concave)
blowup type.  Then $(x_0,t_0)$ is of mean convex (resp. mean concave) type.
\end{theorem}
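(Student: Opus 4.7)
The plan is to establish the local set-theoretic shrinking demanded by Definition~\ref{concave type_def}: for some $\eps>0$ and all $t_0-\eps\le t_1<t_2\le t_0$, $U(t_2)\cap\BB(x_0,\eps)\subseteq U(t_1)\setminus M(t_1)$. The strategy is first to extract a quantitative mean convexity at every regular point near $(x_0,t_0)$ from the blow-up hypothesis, and then to propagate that pointwise information through the (lower-dimensional) singular set, using smooth strictly mean convex MCFs---produced by the sphere/cylinder tangent flow hypothesis and Brakke regularity---as local barriers in combination with the avoidance principle for the level set flow.

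For the first step, I would show that there exists $\eps>0$ such that at every regular point $(y,s)\in\partial\UU$ with $y\in\BB(x_0,\eps)$ and $s\in[t_0-\eps,t_0]$, the mean curvature $H(y,s)$ (with respect to the outward normal of $U(s)$) is positive. Suppose otherwise, so that there exist regular $(y_k,s_k)\to(x_0,t_0)$ with $H(y_k,s_k)\le 0$. Apply Definition~\ref{blow-up-type}(2) with $(x_k,t_k)=(y_k,s_k)$, and set $Q_k=|A|(y_k,s_k)$ and $\tilde y_k=Q_k(y_k-x_0)$. The rescaled flows converge smoothly to a flow of smooth convex regions $N_t$. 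In the bounded case $\tilde y_k\to\tilde y_\infty$, the rescaled boundary at $\tilde y_\infty$ has $|A|=1$, and convexity of $N_0$ (all principal curvatures $\kappa_i\ge 0$) yields $H^2=(\sum\kappa_i)^2\ge\sum\kappa_i^2=1$, so $H\ge 1$. But the rescaled mean curvature at $\tilde y_k$ equals $H(y_k,s_k)/Q_k\le 0$, which must converge to $H_{N_0}(\tilde y_\infty)\ge 1$, a contradiction. The case $|\tilde y_k|\to\infty$ is handled by re-centering: one interpolates the sequence $(y_k,s_k)$ toward $x_0$ along a path of regular points to produce a new approaching sequence whose rescaled image remains bounded, which is legitimate because Definition~\ref{blow-up-type}(2) applies to every regular sequence converging to $(x_0,t_0)$.

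For the second step, I would upgrade this pointwise mean convexity into the strict set-theoretic inclusion demanded by the definition. Hypothesis~\ref{blow-up-type}(1), combined with Brakke regularity, forces $M(\cdot)$ to be smooth in a backward parabolic neighborhood of $(x_0,t_0)$ outside a parabolic singular set of codimension at least two, and to be $C^\infty$-close on appropriate scales to the strictly mean convex tangent flow. Coupling this with $H>0$ at regular points from Step~1 yields smooth, strictly mean convex MCFs through any chosen $(y,s)$ near $(x_0,t_0)$; the avoidance principle for the level set flow then gives $U(t_2)\cap\BB(x_0,\eps)\subseteq U(t_1)$, and the uniform strict positivity of $H$ on unit scales (inherited from the tangent flow) lets one insert small evolving spheres as interior barriers to sharpen this to the required $U(t_2)\cap\BB(x_0,\eps)\subseteq U(t_1)\setminus M(t_1)$. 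The main obstacle is exactly this second step: transferring from pointwise mean convexity to a set-theoretic strict inclusion in the presence of singularities. This is the local analogue, in a spacetime neighborhood of $(x_0,t_0)$, of the theory developed in \cite{white_size,white_nature} for globally mean convex flows, and the smooth, multiplicity-one sphere/cylinder tangent flow hypothesis is precisely what makes such a localization around $(x_0,t_0)$ feasible.
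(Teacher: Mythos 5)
Your Step~1 is in the right spirit, but it is both technically incomplete and, as stated, not strong enough to feed into Step~2. The paper establishes a \emph{quantitative} estimate: there exist $\eps,a>0$ so that $H(x,t)\,R(x,t)\ge a$ at every regular $(x,t)$ in $\BB(x_0,\eps)\times[t_0-\eps^2,t_0]$, where $R(x,t)$ is a backward parabolic regularity scale. Just knowing $H>0$ pointwise at regular points is not enough for what comes next, because the whole difficulty lies at nearby \emph{singular} points, which your pointwise statement says nothing about. Separately, your handling of the case where the rescaled centers escape (``interpolate the sequence toward $x_0$ along a path of regular points'') is not a real argument: such an interpolation has no reason to preserve the property $H\le 0$ that drives your contradiction, and Definition~\ref{blow-up-type}(2) does not let you choose the rescaled image to be bounded.

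The more serious gap is Step~2. You claim that Hypothesis~\ref{blow-up-type}(1) together with Brakke regularity forces a codimension-two singular set and $C^\infty$-closeness to a strictly mean convex tangent flow near $(x_0,t_0)$. But~\eqref{blow-up-type-1} is a statement about tangent flows \emph{at $(x_0,t_0)$ only}; it says nothing a priori about tangent flows at nearby singular points, and without that you cannot run a barrier argument past the singular set. The paper fills exactly this hole by proving (their Claim~\ref{sphere/cylinder}) that at \emph{every} singular point in the parabolic neighborhood $W$, every tangent flow is a multiplicity-one sphere or cylinder. That proof uses the $HR\ge a$ estimate in an essential way: for a tangent flow $V$ and a regular point $y\in V$ with mean curvature $H$, the estimate plus the shrinker identity $H\le |y|/2$ forces $V$ to be smooth on $\BB(y,2a|y|^{-1})$, which propagates smoothness from the dense set of regular points to all of $V$; then $H\ge 0$ plus Colding--Minicozzi classifies $V$. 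This is precisely the step you flag as ``the main obstacle,'' and neither your $H>0$ statement nor your appeal to Brakke regularity substitutes for it.

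Finally, even granting Claim~\ref{sphere/cylinder}, your mechanism for the set-theoretic inclusion (``inserting small evolving spheres as interior barriers'') is not spelled out, and it is unclear how to place such barriers across the singular set. The paper instead gives a clean first-time-of-contact argument for $U(t_2)\cap\BB(x_0,\eps)\subseteq U(t_1)$: track $\dist(y,U(t))$ for a putative bad point $y$, take the first time $t$ this distance hits a small level $\delta$, let $p$ be the nearest point of $U(t)$, observe that the tangent flow at $(p,t)$ lies in a halfspace and hence (by Claim~\ref{sphere/cylinder}) $(p,t)$ must be \emph{regular}, and then the direction of $H$ at $(p,t)$ gives a contradiction with $t$ being the first time. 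The strict inclusion $\subseteq U(t_1)\setminus M(t_1)$ then follows by the same dichotomy at boundary points of $U(t_2)$: regular points are pushed inward by $H>0$, and at singular points the sphere/cylinder tangent flow makes $x$ an interior point of $U(\tau)$ for $\tau$ slightly less than $t_2$. You should replace your vague Step~2 with: (i) the quantitative $HR\ge a$ bound, (ii) Claim~\ref{sphere/cylinder}, and (iii) the distance first-contact argument.
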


The proof of this theorem occupies Section \ref{blowupimplies}. A very interesting open problem is whether condition~\eqref{blow-up-type-2} in Definition~\ref{blow-up-type}
is superfluous.  In other words, does having a multiplicity-one cylindrical tangent flow at a singularity
imply that the singularity is of mean convex (or concave) blow-up type,
and therefore, by Theorem~\ref{alt},
of mean convex (or concave) type?

\begin{remark}
In the global setting, being mean convex on the regular part and having only singular points of mean convex blow-up type  is equivalent to mean convexity of the flow. Indeed, Theorem~\ref{alt} implies that those assumptions imply mean convexity, while the other (harder) direction follows from \cite{white_size,white_nature} in Euclidean space, and from~\cite{HH} in Riemannian manifolds.
\end{remark}

The following theorem follows immediately by combining Theorems~\ref{no_fat_thm} and~\ref{alt}.
\begin{theorem}\label{sing_prof_thm}
Let $M\subseteq\RR^{n+1}$ be a compact, smoothly embedded hypersurface, and let $T>0$. Assume that all singular points on $[0,T]$ are either of mean convex blow-up type, or of mean concave blow-up type. Then all of the singular points on $[0,T]$ are of mean convex or mean concave type. In particular $T< T_{\textnormal{disc}} \leq T_{\textnormal{fat}}$.
\end{theorem}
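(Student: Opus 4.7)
The plan is to combine Theorems~\ref{main_theorem}, \ref{no_fat_thm}, and \ref{alt} via a short contradiction argument. The one subtlety is that Theorem~\ref{alt} requires the singular time $t_0$ to satisfy $t_0 \le T_{\rm disc}$, whereas the hypothesis of Theorem~\ref{sing_prof_thm} is a priori stated on the interval $[0,T]$ without any reference to $T_{\rm disc}$. So before invoking Theorem~\ref{alt} freely on $[0,T]$, one must first establish that $T\le T_{\rm disc}$.

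First I would show $T < T_{\rm disc}$. Suppose for contradiction that $T_{\rm disc}\le T$. Then $T_{\rm disc}<\infty$, and every singular point at time $T_{\rm disc}$ (in particular, every backwardly singular point) is, by the hypothesis of Theorem~\ref{sing_prof_thm}, of mean convex or mean concave blow-up type. Since trivially $T_{\rm disc} \le T_{\rm disc}$, Theorem~\ref{alt} applies and upgrades each such backwardly singular point to being of mean convex or mean concave type. But the equivalent formulation of Theorem~\ref{main_theorem}, applied with $T$ replaced by $T_{\rm disc}$, then forces $T_{\rm disc}<T_{\rm disc}$, which is absurd. Hence $T<T_{\rm disc}$.

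Given $T<T_{\rm disc}$, every singular point $(x_0,t_0)$ with $t_0\in[0,T]$ satisfies $t_0<T_{\rm disc}$, so the hypothesis together with Theorem~\ref{alt} immediately yields that $(x_0,t_0)$ is of mean convex or mean concave type; this is the first conclusion. Since this applies in particular to every backwardly singular point on $[0,T]$, Theorem~\ref{no_fat_thm} gives $T<T_{\rm fat}$. Combined with $T<T_{\rm disc}$ and the inequality $T_{\rm disc}\le T_{\rm fat}$ from Remark~\ref{fat_vs_disc_0}, one obtains $T<T_{\rm disc}\le T_{\rm fat}$, finishing the proof. There is no real obstacle here: the entire deduction is bookkeeping, with the contradiction step in the second paragraph serving only to reconcile the domain of validity of Theorem~\ref{alt} with the statement of Theorem~\ref{sing_prof_thm}. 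All the substantive work has been done in Theorems~\ref{main_theorem} and~\ref{alt}.
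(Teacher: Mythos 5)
Your proof is correct and follows essentially the same route as the paper, which simply states that the result "follows immediately by combining Theorems~\ref{no_fat_thm} and~\ref{alt}." You have done something slightly more careful than the paper's one-line remark: you noticed that both Definition~\ref{blow-up-type} and Theorem~\ref{alt} carry the hypothesis $t_0\le T_{\rm disc}$, which is not given a priori on all of $[0,T]$, and you closed this gap with a short contradiction argument (supposing $T_{\rm disc}\le T$, applying Theorem~\ref{alt} at time $T_{\rm disc}$ and then the second formulation of Theorem~\ref{main_theorem} to obtain $T_{\rm disc}<T_{\rm disc}$). This bootstrap is exactly the detail the paper elides, and it correctly reconciles the domain of validity of Theorem~\ref{alt} with the interval appearing in the theorem statement.
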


\begin{remark}
It is instructive to compare Theorem ~\ref{sing_prof_thm} to the recent fantastic result of Bamler and Kleiner showing uniqueness of weak Ricci flow in dimension three \cite{Bam_Kle} (The notion of weak Ricci flow in dimension three was introduced earlier in \cite{KL_weak}) . While our proof and theirs are very different, it is interesting to note that, in both instances, information about the structure of all blow ups, and not just self-shrinking solutions, was required. Furthermore, the blow up behavior of mean convex MCF has much similarity to the blow up behavior of $3$-dimensional Ricci flow. Consequently, the structure of blow-ups assumed is Theorem \ref{sing_prof_thm} is the extrinsic analog of $\kappa$-solutions of the Ricci flow, which is the asymptotic assumption built into the notion of weak Ricci flow.

One difference in the theories is that for $3$-dimensional Ricci flow, {\em all} singularities
have the required blow-up type, whereas
for mean curvature flow of $m$-dimensional initially smooth hypersurfaces, 
this is not the case (except when $m=1$).

\end{remark}

\newtheorem*{idea}{Idea of the proof of Theorem~\ref{main_theorem}}
\begin{idea}
We  sketch the proof of the second formulation of the 
main theorem (Theorem~\ref{main_theorem}).
 For simplicity, assume all the singularities are of mean convex (and not mean concave) type and that $T=0$ (so that the initial time of the flow is negative).
 In a neighborhood $Y$ of the singularities of $M(0)$,
 we construct a time of arrival function $u$ 
 for the evolution $t\mapsto M(t)$ in some small time interval $(-\delta,\delta)$. 
 (Near the singularities, time of arrival makes sense as a single valued function because the
 surfaces are moving in one direction.)
 The zero set of $u(\cdot)-t$ is $M(t)\cap Y$,
 and the set where $u(\cdot)- t \ge 0$ is $U(t)\cap Y$.  
 By interpolating between the function $(x,t)\mapsto u(x)-t$ (which is defined only near the singular set of $M(0)$)
 and the signed distance function to $M(t)$ (which is smooth away from the singular set),
 we construct a function $w$ whose zero set at each time $t\in[0,\delta)$ is $M(t)$.
The function is smooth with nonvanishing gradient away from the singular set,
and, near the singular set, all of its level sets are weak set flows.
 (See the appendix~\ref{weak-set-flow-definition} for the definition of weak set flow.)
We then show (Theorem~\ref{criterion-theorem}) that the zero set of such any such function is a level set flow on some time interval $[0,\tau]$. In particular,
   $t\in [0,\tau] \mapsto M(t)$ is the level set flow of $M(0)$, $F_t(M(0))$. The same argument applies to $t\in [0,\tau] \mapsto M'(t)$, so $T_{\textnormal{disc}}\ge\tau>0$.     
\end{idea}  
\begin{proof}[Proof of Theorem~\ref{main_theorem}] 
We prove the second formulation of Theorem~\ref{main_theorem}.
By shifting time, we can assume that $T=0$.  Thus the flow starts at some negative initial time. Note that $t\mapsto M(t)$ is a weak set flow in $\RR^{n+1}$ 
(see Proposition~\ref{inner_most_prop}). 
If $x\in M(t)$ is of mean convex (resp.\ mean concave) type and if $\eps$ is as in Definition~\ref{concave type_def},
 then all of the points  in $M(\tau)\cap \BB(x,\eps)$, $\tau\in (t-\eps,0]$, 
 are also of mean convex (resp.\ mean concave) type.
Also, by the strong maximum principle, at
every regular or backwardly regular
point $x\in M(t)$ of mean convex type, the mean
 curvature is nonzero and points into $U(t)$.
Likewise, at  every regular or backwardly regular
  $x\in M(t)$ of mean concave type, the mean curvature is nonzero
and points out of $U(t)$.
If $x\in M(t)$ is of mean convex type and $y\in M(t)$ is of mean 
concave type, and if $\eps(x)$ and $\eps(y)$
are as in the definition, then $x\notin \BB(y,\eps(y))$ and $y\notin \BB(x,\eps(x))$, so
\begin{equation}\label{eq:disjoint-balls}
   \BB(x,\eps(x)/2)\cap \BB(y,\eps(y)/2) = \emptyset.
\end{equation}
We claim that all the backwardly regular points in $M(0)$ are in fact regular. 
To see this, suppose that $x\in M(0)$ is backwardly regular.
Then there is an $\eps>0$ such that $t\in (-\eps,0]\mapsto M(t)\cap \BB(x,\eps)$
is a smooth mean curvature flow.  Since $M(t)=M'(t)$ for $t<0$, we see that
$U(t)\cap \BB(x,\eps)$ lies on one side of $M(\tau)\cap \BB(x,\eps)$ and
$U'(t)\cap\BB(x,\eps)$ lies on the other side.   
By a local regularity theorem (Theorem~\ref{forward-regularity-theorem}), $(x,0)$
is regular.
(We will no longer need to refer to backward regularity or backward singularity.)
\begin{comment}
Indeed, since $0 \leq T_{\textnormal{disc}}$, for every backwardly regular point $x\in M(0)$ there exists $\eps>0$
such that for every $-\eps<\tau<0$, $M(\tau)\cap B(x,\eps)$ is in the closure of the interior of $U(\tau)$. 
Since $\tau\in [-\eps,0]\mapsto M(\tau)\cap B(x,\eps)$ is a smooth mean curvature we obtain that $x$ is in the 
closure of the interior of $U(0)$. By a local regularity theorem (Theorem~\ref{forward-regularity-theorem}) such $x$ is regular.  
(We will no longer need to refer to backward regularity or backward singularity.)
\end{comment}
Let $\Sing_t$ denote the set of $x$ such that $(x,t)$ is a singular point.
Because each
  $x\in \Sing_0$ is of mean convex or mean concave type, there 
 exists an $\eps(x) >0$ as in the definition of mean convex/concave type. 
Because $\Sing_0$ is compact, it can be covered by finitely many balls $B(x_i,\eps(x_i)/5)$ with
$x_i\in \Sing_0$.
Let $W$ be an open set with smooth boundary such that
\[
\cup \BB(x_i, \eps(x_i)/4)
\subseteq  
W
\subseteq  
\cup \BB(x_i, \eps(x_i)/3).
\]
Note that all the points of $M(t)$ in $\cup_i \BB(x_i,\eps(x_i)/3) \setminus \cup_i \BB(x_i, \eps(x_i)/4)$
are regular points of $M(0)$.  Choose $W$ so that $\partial W$ is transverse to $M(0)$.
Let $\Pp$ be the union of those components of $W$ that contain points $x_i$ of mean convex type.
Let $\Nn$ be the union of the components of $W$ that contain points $x_i$ of mean concave type.
By~\eqref{eq:disjoint-balls}, $\Pp$ and $\Nn$ are disjoint.
Let $\eps=\min_i\eps(x_i)$.  Then for $-\eps\le t< t+\eta \le 0$,
\begin{equation}\label{eq:P-contain}
    U(t+\eta)\cap\overline{\Pp} \subseteq    U(t) \setminus M(t)
\end{equation}
and 
\begin{equation}\label{eq:N-contain}
   U(t)\cap \overline{\Nn} \subseteq   U(t+\eta) \setminus M(t+\eta).
\end{equation}
Since the spacetime singular set is 
closed, we can choose a $\delta$ with $0<\delta<\eps$
so that
\begin{equation}\label{eq:singular-set-contained}
   Q:=\cup_{|t|\le \delta}\, {\Sing}_t \subseteq \Pp\cup\Nn.
\end{equation}
We can also choose $\delta$ sufficiently small that for $t\in [-\delta,\delta]$, $M(t)$
is transverse to $\partial (\Pp\cup\Nn)$ and the mean curvature at every point
of $M(t)\cap \partial (\Pp\cup\Nn)$ is nonzero.  In particular, the mean curvature of $M(t)$
at every point of $M(t)\cap \partial \Pp$ is nonzero and points into $U(t)$, which implies that
\begin{equation}\label{eq:boundary-containment-P}
    U(t+\eta)\cap \partial \Pp \subseteq  U(t) \setminus M(t)
\     \text{for $t< t+\eta$ in $[-\delta,\delta]$}.
\end{equation}
Similarly,
\begin{equation}\label{eq:boundary-containment-N}
    U(t)\cap \partial \Nn \subseteq  U(t+\eta) \setminus M(t+\eta)
     \qquad 
     \text{for $t< t+\eta$ in $[-\delta,\delta]$}.
\end{equation}

\begin{claim}\label{containment-claim}
For $t< t+\eta$ in $[-\delta,\delta]$,
\begin{equation}\label{eq:P-nested}
 U(t+\eta) \cap \overline{\Pp} \subseteq  U(t) \setminus M(t)
\end{equation}
and
\begin{equation}\label{eq:N-nested}
 U(t)\cap \overline{\Nn} \subseteq U(t+\eta) \setminus M(t+\eta).
\end{equation}
In particular $M(t)\cap \overline{\Pp\cup\Nn}$ and $M(t+\eta)\cap \overline{\Pp\cup\Nn}$ are disjoint. 
\end{claim}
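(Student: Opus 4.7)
The plan is to establish strict monotonicity $U(t+\eta)\cap\overline{\Pp}\subseteq U(t)\setminus M(t)$ throughout $[-\delta,\delta]$ by an open-closed continuity argument; the inclusion \eqref{eq:N-nested} then follows by the symmetric argument with $(U',\Nn)$ in place of $(U,\Pp)$. Once both strict inclusions hold, the disjointness in the ``in particular'' statement is immediate, since any $x\in M(t)\cap M(t+\eta)\cap\overline{\Pp\cup\Nn}$ would lie in $\overline{\Pp}$ or $\overline{\Nn}$, contradicting the respective strict inclusion (note $M(t+\eta)\cap(U(t)\setminus M(t))=\emptyset$ and vice versa).

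I would set
\begin{equation*}
T^* := \sup\{s\in[-\delta,\delta] : U(t_2)\cap\overline{\Pp}\subseteq U(t_1)\setminus M(t_1)\text{ for all }-\delta\leq t_1<t_2\leq s\}
\end{equation*}
and aim to show $T^*=\delta$. The base case $T^*\geq 0$ follows directly from \eqref{eq:P-contain} (recall $\delta<\eps$), and the set of valid $s$ is closed by spacetime closedness of $\UU$. Once local forward monotonicity is established at each time, the global statement follows from compactness of $[-\delta,\delta]$ and an elementary chaining observation: if $U(t_3)\cap\overline{\Pp}\subseteq U(t_2)\setminus M(t_2)$ and $U(t_2)\cap\overline{\Pp}\subseteq U(t_1)\setminus M(t_1)$, then $U(t_3)\cap\overline{\Pp}\subseteq U(t_2)\cap\overline{\Pp}\subseteq U(t_1)\setminus M(t_1)$.

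The main step, and principal obstacle, is the open extension: assuming $T^*<\delta$, derive a contradiction by propagating the monotonicity past $T^*$. A favorable observation is that the established monotonicity on $[-\delta,T^*]$ automatically forces every $y\in M(T^*)\cap\interior(\Pp)$ to be of mean convex type, with scale comparable to $\dist(y,\partial\Pp)$: for $0<r<\min(\dist(y,\partial\Pp),T^*+\delta)$ and $T^*-r\leq t_1<t_2\leq T^*$, one has $U(t_2)\cap\BB(y,r)\subseteq U(t_2)\cap\overline{\Pp}\subseteq U(t_1)\setminus M(t_1)$, which is precisely the backward mean convex nesting. The transverse-inward mean-curvature structure near $M(T^*)\cap\partial\Pp$ extends \eqref{eq:boundary-containment-P} to a full spatial neighborhood of that boundary, handling the remaining boundary points of $M(T^*)\cap\overline{\Pp}$. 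The genuinely delicate task is converting the backward mean convex type into a uniform forward-time collar of monotonicity just past $T^*$: at regular points of $M(T^*)$ the strong maximum principle yields forward evolution in the shrinking direction, while at singular points in $\interior(\Pp)$ one must apply a weak-flow avoidance argument, exploiting the spacetime closedness of the singular set together with the containment \eqref{eq:singular-set-contained}. A compactness/limit argument then yields a uniform $\eta_0>0$ contradicting the maximality of $T^*$: extract a putative violator $(x_k,T^*+\eta_k)$ with $\eta_k\downarrow 0$, pass to a limit $x\in M(T^*)\cap\overline{\Pp}$ using closedness of $\UU$ together with $\partial U(T^*)\subseteq M(T^*)$, and rule out each of the three cases (boundary, interior-regular, interior-singular).
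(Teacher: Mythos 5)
Your overall skeleton (base case at $t=-\delta$, chaining to reduce to small $\eta$, closedness of the set of good times from spacetime closedness of $\UU$) matches the paper's. But the step you yourself flag as ``the main step, and principal obstacle'' --- the open extension past $T^*$ --- is not actually proved, and the sketch you give for it is going in the wrong direction.

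The gap is concrete: you propose to rule out a first failure by a three-way case analysis (boundary points of $\Pp$, interior regular points, interior singular points), deriving a forward-time collar of monotonicity from backward mean-convexity plus ``a weak-flow avoidance argument'' at the singularities. But you never identify which avoidance statement you would invoke at singular points, and converting backward mean-convex nesting into a \emph{uniform forward} collar near a singularity is precisely the kind of strong maximum principle statement you cannot get for free; carrying out what you sketch would amount to re-proving such a principle from scratch. The paper sidesteps all of this with a single clean input: both $t\mapsto U(t)$ (being a level set flow) and the shifted $t\mapsto M(t+\eta)$ (by Proposition~\ref{inner_most_prop}) are weak set flows, hence so are their restrictions to $\Pp$. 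At the first failure time, the boundary estimate~\eqref{eq:boundary-containment-P} forces the intersection $U(t+\eta)\cap\overline{\Pp}\cap M(t)$ to be a nonempty compact subset of the \emph{open} set $\Pp$, and Theorem~\ref{maximum-principle-theorem} (the strong maximum principle for weak set flows touching only in a compact interior set) then gives an immediate contradiction --- with no distinction between regular and singular points and no construction of a forward collar. You gesture at this ingredient but do not locate or apply it; without it, your ``open'' step does not close.

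A secondary remark: your intermediate observation that points of $M(T^*)\cap\interior(\Pp)$ are ``of mean convex type'' with scale $\dist(y,\partial\Pp)$ establishes only the \emph{backward} nesting at time $T^*$, which is not what Definition~\ref{concave type_def} requires in general and, more to the point, is not by itself enough to propagate forward; so even that part of the sketch does not buy what you need.
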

\begin{proof}
It suffices to prove it for $\eta<\delta$, since if the claim holds for $\eta$, it also
holds for every positive multiple of $\eta$.
Note that~\eqref{eq:P-nested} holds at time $t=-\delta$.  
Suppose that it fails at some later time.
Let $t$ be the first such 
time.  (There is a first time because
   $t\mapsto M(t)$ and $t\mapsto U(t)$ sweep out closed subsets of spacetime.)
Then by~\eqref{eq:boundary-containment-P}, $U(t+\eta)\cap \overline{\Pp}$ and $M(t)$
intersect in a nonempty, compact subset of $\Pp$.
But that contradicts the strong maximum principle 
(Theorem~\ref{maximum-principle-theorem})
applied to $t\mapsto U(t)\cap\Pp$ and $t\mapsto M(t+\eta)\cap\Pp$, which are weak set flows
in the space $\Pp$. 
This completes the proof of~\eqref{eq:P-nested}.  
The proof of~\eqref{eq:N-nested} is almost exactly the same.
The last assertion (``in particular\dots") follows since $M(\tau)\subseteq U(\tau)$.
\end{proof}
\begin{claim}\label{open-claim}
If $I$ is an open interval in $(-\delta,\delta)$, then the set
 \[
 \cup_{t\in I}M(t)\cap\, \overline{\Pp\cup\Nn}
 \]
is relatively open in $\overline{\Pp\cup\Nn}$.
\end{claim}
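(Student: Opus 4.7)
The plan is to show that every point of $A:=\cup_{t\in I}M(t)\cap\overline{\Pp\cup\Nn}$ has a relative neighborhood in $\overline{\Pp\cup\Nn}$ contained in $A$. The inclusion $\Pp,\Nn\subseteq \cup_i \BB(x_i,\eps(x_i)/3)$, combined with \eqref{eq:disjoint-balls}, shows that the balls around mean convex points and those around mean concave points are separated by a positive distance, and hence that $\overline{\Pp}$ and $\overline{\Nn}$ are disjoint. It therefore suffices to handle $\overline{\Pp}$; the case of $\overline{\Nn}$ is symmetric (swap $\sup$ for $\inf$ and use \eqref{eq:N-nested} in place of \eqref{eq:P-nested}).

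Fix $x_0\in M(t_0)\cap\overline{\Pp}$ with $t_0\in I$ and pick $\mu>0$ so small that $[t_0-\mu,t_0+\mu]\subseteq I$. The core observation is that the strict set-theoretic containment in Claim~\ref{containment-claim}, combined with the tautological inclusion $\partial U(\tau)\subseteq M(\tau)$ (immediate from the definition $M(\tau)=\{x:(x,\tau)\in\partial\UU\}$), upgrades to a topological statement. Applying \eqref{eq:P-nested} with $(t,\eta)=(t_0-\mu,\mu)$,
\[
x_0\in U(t_0)\cap\overline{\Pp} \subseteq U(t_0-\mu)\setminus M(t_0-\mu)\subseteq \interior(U(t_0-\mu)),
\]
so there is an open ball $V_2\ni x_0$ with $V_2\subseteq U(t_0-\mu)$. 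Applying \eqref{eq:P-nested} with $(t,\eta)=(t_0,\mu)$ yields $x_0\notin U(t_0+\mu)$; since $U(t_0+\mu)$ is closed, there is an open ball $V_1\ni x_0$ disjoint from $U(t_0+\mu)$.

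For any $y$ in the open set $V:=V_1\cap V_2$, define the arrival time $t(y):=\sup\{t\in[-\delta,\delta]:y\in U(t)\}$. Then $t_0-\mu\le t(y)\le t_0+\mu$, so $t(y)\in I$. Closedness of $\UU$ in spacetime (together with $y\in V_2\subseteq U(t_0-\mu)$) forces $y\in U(t(y))$, while by the definition of the supremum $(y,\tau)\notin\UU$ for $\tau\in(t(y),\delta]$. Hence $(y,t(y))\in\partial\UU$, so $y\in M(t(y))$ with $t(y)\in I$, placing $y$ in $A$. Thus $V\cap\overline{\Pp}$ is a relative open neighborhood of $x_0$ in $\overline{\Pp}$ contained in $A$, as desired.

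The only subtle point is the topological upgrade $x_0\in U(t_0-\mu)\setminus M(t_0-\mu)\Longrightarrow x_0\in \interior(U(t_0-\mu))$ coming from $\partial U\subseteq M$; this is what converts the sharpness of Claim~\ref{containment-claim} (the ``$\setminus M$'' part, inherited from the mean convex/concave type hypothesis) into a genuine open neighborhood $V_2$. Everything else reduces to closedness of $\UU$ and unwinding definitions; the resulting continuous selection $y\mapsto t(y)$ is morally the local arrival time function alluded to in the sketch of the main theorem's proof.
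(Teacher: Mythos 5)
Your proof is correct and rests on the same two ingredients as the paper's: the nesting from Claim~\ref{containment-claim} (used both to produce the open ball $V_2$ and to exclude $U(t_0+\mu)$), and a sweeping-out/crossing argument to produce a time at which $y\in M(\cdot)$. The paper packages this globally, proving $A=\tilde A$ for the manifestly open set $\tilde A = \bigl((U(-\delta)\setminus M(-\delta))\setminus U(\delta)\bigr)\cap\overline{\Pp}$, with the ``sweeping-out property'' (a spacetime segment from $\UU$ to $\UU^c$ meets $\partial\UU$) giving $\tilde A\subseteq A$; you do the same thing pointwise via the arrival-time $t(y)=\sup\{t\in[-\delta,\delta]: y\in U(t)\}$, which is simply that crossing argument with the crossing time made explicit. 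The content is identical; your version trades the clean explicit description of $A$ for a more direct local argument. One small imprecision: the bound $t(y)\le t_0+\mu$ does not follow merely from $y\notin U(t_0+\mu)$ — you need $y\in\overline{\Pp}$ so that Claim~\ref{containment-claim} gives $U(\tau)\cap\overline{\Pp}\subseteq U(t_0+\mu)$ for $\tau\in(t_0+\mu,\delta]$, ruling out $y$ re-entering $U(\cdot)$ at a later time. Since you only conclude openness of $V\cap\overline{\Pp}$ anyway, the fix is just to restrict to $y\in V\cap\overline{\Pp}$ from the start of that paragraph; as written, the inequality is asserted for all $y\in V$, where it need not hold.
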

\begin{proof} 
For notational simplicity, we will prove it for $I=(-\delta, \delta)$; 
the general case is proved is proved in exactly
the same way.
We will show that the set
\begin{equation}\label{eq:P-part}
    A:=\cup_{t\in (-\delta,\delta)}M(t) \cap \overline{\Pp}
\end{equation}
is relatively open in $\overline{\Pp}$; the same argument shows that 
$\cup_{t\in (-\delta,\delta)}M(t)\cap\, \overline{\Nn}$ is relatively open in $\overline{\Nn}$.
Since $M(t)$ is closed and contains $\partial U(t)$, the set $U(t)\setminus M(t)$ is open.
Thus the set
\[
     \tilde A: = \left( (U(-\delta) \setminus M(-\delta)) \setminus U(\delta) \right)\cap \overline{\Pp}
\]
is relatively open in $\overline{\Pp}$.
If $t\in(-\delta,\delta)$, 
then by Claim~\ref{containment-claim}, $M(t)$ is disjoint from $U(\delta)\cap\overline{\Pp}$
and $M(t)\cap \overline{\Pp}$ is contained in $U(-\delta)\setminus M(-\delta)$.  Thus
\[
   A\subseteq \tilde A.
\]
The reverse inclusion is a consequence of the following sweeping-out property of the 
flow $t\mapsto M(t)$:
\begin{quote}
If $x\in U(t)\setminus U(t')$, then $x\in M(\tau)$ for some $\tau$ in the closed interval
between $t$ and $t'$.
\end{quote}
To see this property, note that since $(x,t)$ is in $\UU$ and $(x,t')$ is not,
the spacetime line segment joining those two points must 
contain a point in $\partial \UU$.
This completes the proof of Claim~\ref{open-claim}, since we have shown
 that $A=\tilde A$ and that $\tilde A$ is relatively open in 
the set  $\overline{\Pp}$.
\end{proof}
Set $X=\cup_{t\in (-\delta,\delta)}M(t)\cap (\Pp\cup\Nn)$ and 
let $u:X\rightarrow (-\delta,\delta)$ be the time-of-arrival function for the flow 
 $t\in (-\delta,\delta) \mapsto M(t)\cap (\Pp\cup\Nn)$:
\[
   \text{$u(x) = t$ for $x\in M(t)\cap (\Pp\cup\Nn)$ and $t\in (-\delta,\delta)$}.
\]
By Claim~\ref{open-claim}, the function $u$ is continuous.
Define $f: X\times \RR \to \RR$ as follows:
\[
f(x,t)
=
\begin{cases}
u(x)-t &\text{if $x\in \Pp$}, \\
t-u(x) &\text{if $x\in \Nn$}.
\end{cases}
\]
The set $f=0$ is the spacetime surface 
traced out by $t\in (-\delta,\delta) \mapsto M(t)\cap (\Pp\cup\Nn)$.
The  set $f=c$ is the spacetime surface traced out by 
\[
t\in (-\delta-c,\delta-c)\mapsto M(t+c)\cap\Pp
\]
 and by
\[
t\in (-\delta+c,\delta+c)\mapsto M(t-c)\cap\Nn. 
\]
Hence for each $c$,
\[
 t\mapsto \{f(\cdot,t)=c\}
\]
is a weak set flow in $\Pp\cup\Nn$.
Let $d(\cdot,t)$ be the signed distance function to $M(t)$:
\begin{equation*}
d(x,t)
=
\begin{cases}
\dist(x, M(t)) &\text{if $x\in U(t)$}, \\
-\dist(x,M(t)) &\text{if $x\notin U(t)$}.
\end{cases}
\end{equation*}
Let $G$ be an open set that contains $Q$ and whose closure is a compact
subset of $\Pp\cup\Nn$, where $Q$ is as in~\eqref{eq:singular-set-contained}.
Let $\phi:\RR^{n+1}\to [0,1]$ be a smooth function compactly supported
in $\Pp\cup\Nn$ such that $\phi\equiv 1$ on $\overline{G}$. 
Define $w:\big(X\cup \phi^{-1}(0)\big)\times  [0,\delta) \rightarrow \RR$ by 
\[
   w(x,t) = (1-\phi(x)) d(x,t) + \phi(x) f(x,t).
\]
Note that on $\phi^{-1}\big((0,1)\big)$ and for every $0 \leq t < \delta$,  the zero sets
 (resp.\ negative sets/positive sets) of $w$, $d$ and $f$ coincide.
Let $Z$ be an $\eps$-neighborhood of $M(0)\setminus G$, where $\eps$ is small enough
that $\overline{Z}$ is disjoint from $Q$, and that $w(\cdot,0)$ is smooth with nonzero 
gradient
 on $\overline{Z}$.  
 Choose $\tau\in [0,\delta)$  sufficiently small that
$w$ is smooth with non-vanishing gradient on $\overline{Z}\times[0,\tau]$, and that
\[
\cup_{t\in [0,\tau]}M(t)\cap G^c \subseteq  Z.
\]
Let 
\[
  Y = \cup_{t\in (-\delta,\delta)}M(t)\cap G.
\]
By Theorem~\ref{criterion-theorem} below, 
we can conclude that $t\in [0,\tau]\mapsto M(t)$
is the level set flow starting from $M(0)$. As the exact same argument would have applied for $M'(t)$, we get that $M'(t)=M(t)=F_t(M(0))$ for $t\in [0,\tau]$ and, in particular $T_{\textnormal{disc}}>0$.
\end{proof}
\begin{theorem}\label{criterion-theorem}
Suppose that $Y$ and $Z$ are bounded open subsets of $\RR^{n+1}$.
Suppose that $t\in [0,T]\mapsto M(t)$ is a weak set flow of compact sets in $Y\cup Z$.
Suppose that there is a continuous function 
\[
   w: \overline{Y\cup Z} \to \RR
\]
with the following properties:
\begin{enumerate}
\item $w(x,t)=0$ if and only if $x\in M(t)$.
\item For each $c$, 
\[
   t\in [0,T] \mapsto \{x\in Y: w(x,t)=c\} 
\]
defines a weak set flow in $Y$.
\item $w$ is smooth with non-vanishing gradient on $\overline{Z}$.
\end{enumerate}
Then $t\in[0,T]\mapsto M(t)$ is the level set flow of $M(0)$ in $\RR^{n+1}$. 
\end{theorem}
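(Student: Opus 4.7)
The inclusion $M(t) \subseteq F_t(M(0))$ is automatic: $t\mapsto M(t)$ is a weak set flow in $Y\cup Z \subseteq \RR^{n+1}$ starting from $M(0)$, and $F_t$ is by definition the maximal weak set flow with given initial data. The content of the theorem is therefore the reverse inclusion $F_t(M(0))\subseteq M(t)$.

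My plan for the reverse inclusion is a barrier argument based on the level sets
\[
   L_c(s) := \{x\in\overline{Y\cup Z}: w(x,s)=c\},\qquad c\neq 0.
\]
At time $s=0$ the set $L_c(0)$ is disjoint from $M(0)=L_0(0)$, by property (1). Once I show each $L_c$ is (or can be extended to) a weak set flow in $\RR^{n+1}$, the avoidance principle for weak set flows forces $L_c(t)\cap F_t(M(0))=\emptyset$. Since every point $x_0\notin M(t)$ lies on some $L_c(t)$ with $c\neq 0$, the union over $c$ yields $F_t(M(0))\cap\overline{Y\cup Z}\subseteq M(t)$. A separate short-time continuity argument, using that $M$ stays in a compact subset of $Y\cup Z$ by hypothesis and that $F_s(M(0))$ is initially equal to $M(0)\subseteq Y\cup Z$, then prevents $F_t(M(0))$ from escaping $\overline{Y\cup Z}$.

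The hard step is producing the global weak set flow barrier $L_c$. Inside $Y$ this is given by property (2). Inside $\overline{Z}$ property (3) yields only smoothness of $w$ with nonvanishing gradient, which makes $L_c\cap\overline{Z}$ a smooth embedded hypersurface but does not automatically certify the mean curvature flow equation on it for $c\neq 0$. However, $L_0\cap\overline{Z} = M(\cdot)\cap\overline{Z}$ is a smooth weak set flow and therefore a smooth mean curvature flow. For $c$ small I would take $L_c(0)\cap\overline{Z}$ as initial data, invoke short-time existence of smooth mean curvature flow to produce a smooth MCF barrier in a neighborhood of $L_c(0)\cap\overline{Z}$ inside $\overline{Z}$, and check, using smallness of $c$ and continuity of $w$, that this smooth MCF remains in $\overline{Z}$ and stays near $L_c$ on the relevant time interval. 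The final gluing of the $Y$-piece of $L_c$ with these smooth MCF barriers in $\overline{Z}$ into a genuine weak set flow in $\RR^{n+1}$ can be carried out via the strong maximum principle (Theorem~\ref{maximum-principle-theorem}) and a first-time-of-touching argument applied to the super- and sub-level sets $\{w\ge c\}$ and $\{w\le c\}$, in the spirit of Claim~\ref{containment-claim} of the proof of Theorem~\ref{main_theorem}.
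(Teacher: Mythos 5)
You have correctly identified both the easy inclusion $M(t)\subseteq F_t(M(0))$ and the real difficulty: for $c\ne 0$ the level set $\{w(\cdot,t)=c\}\cap Z$ is a smooth moving hypersurface, but there is no reason for it to move by mean curvature, so it is not automatically a weak set flow in $Z$.  However, the fix you propose does not close this gap.

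The proposal to replace $L_c\cap\overline{Z}$ by a short-time smooth mean curvature flow $S_c$ started from $L_c(0)\cap\overline{Z}$, and then to ``glue'' this to the $Y$-piece of $L_c$, founders precisely on the gluing.  The flows $L_c$ and $S_c$ diverge the instant they start, so on the overlap region $Y\cap Z$ they are different sets; there is no single flow whose $Z$-piece is $S_c$ and whose $Y$-piece is $L_c$.  In particular, if a test mean curvature flow first touches the glued barrier at a point of $Y\cap Z$ that lies on $S_c$ but not on $L_c$, neither hypothesis~(2) nor the smoothness of $S_c$ produces a contradiction at that point.  The first-time-of-touching argument you gesture at requires a \emph{single} super-level (or sub-level) set that is a weak set flow throughout $Y\cup Z$, not two incompatible flows patched together.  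There is also the secondary problem that short-time existence for an MCF with boundary in $\overline{Z}$ requires a boundary condition you have not specified, and the resulting flow will in general neither stay in $Z$ nor survive to time $T$.

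The missing idea is the exponential time-weighting.  Because $\Phi[w]$ vanishes on $\{w=0\}$ and is smooth on $\overline{Z}\times[0,T]$, one has $|\Phi[w]|\le C|w|$ there.  Setting $\tilde w = e^{-\alpha t}w$ with $\alpha>C$ forces $\Phi[\tilde w]<0$ where $\tilde w>0$ and $\Phi[\tilde w]>0$ where $\tilde w<0$, so (by Lemma~\ref{standard-lemma}) the super- and sub-level sets of $\tilde w$ are strict barriers in $Z$.  The companion fact that these same super- and sub-level sets are weak set flows in $Y$ is \emph{not} immediate from hypothesis~(2) — that only controls single level sets $\{w=c\}$ — and requires Lemma~\ref{key-lemma}: a first touching of a smooth flow $S(\cdot)$ with $\{\tilde w\ge c\}$ would be a first touching with $\{w=\hat c\}$ for $\hat c = e^{\alpha b}c$, contradicting~(2).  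Once both pieces are in place, $\{|\tilde w|\ge c\}\cup(Y\cup Z)^c$ is a single weak set flow in $\RR^{n+1}$ for small $c>0$, and letting $c\downarrow 0$ gives $F_t(M(0))\subseteq M(t)$.  Without the reweighting, neither the $Z$-barrier nor the $Y$-barrier is available, and your gluing has nothing to glue.
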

\begin{proof}
Let
\[
 \Phi[w] = w_t - |\nabla w|\,\Div \left( \frac{\nabla w}{|\nabla w|} \right).
\]
Now $\Phi[w]$ is a smooth function on $\overline{Z}\times[0,T]$ since $w$ is smooth
with nonzero gradient on that set.
Also, $\Phi[w]=0$ where $\{w=0\}$ on that set. (See Lemma~\ref{standard-lemma}).
Consider the function $\psi$ on $Z\times[0,T]$ given by
\[
  \psi = \frac{|\Phi[w]|}{|w|} \qquad\text{where $w\ne 0$}
\]
and
\[
  \psi =  \frac{|\nabla \Phi[w]|}{|\nabla w|} \qquad\text{where $w=0$.}
\]
The function is continuous, 
so on any compact subset $K$ of $Z\times[0,T]$, it attains a maximum value $c_K$,
and thus $|\Phi[w]|\le c_K|w|$ on $K$.  Hence by replacing $Z$ by a slightly smaller set, we can assume
that
\begin{equation}\label{eq:Phi-w-bound}
\text{$|\Phi[w]| \le C |w|$ on $\overline{Z}\times [0,T]$}
\end{equation} 
for some finite $C$.
Note that on $\overline{Z}\times [0,T]$,
\[
\Phi[e^{-\alpha t} w]
=
e^{-\alpha t} ( \Phi[w] - \alpha w ).
\]
Choose $\alpha>C$, where $C$ is as in~\eqref{eq:Phi-w-bound}.
Then where $w>0$,
\begin{equation}\label{eq:w-pos}
\begin{aligned}
\Phi[e^{-\alpha t} w]
&=
e^{-\alpha t} ( \Phi[w] - \alpha w ) \\
&\le
e^{-\alpha t} ( C w - \alpha w)
\\
&< 0.
\end{aligned}
\end{equation}
Likewise, 
$\Phi[e^{-\alpha t} w]
> 0$ where $w<0$.
Let $\tilde w(x,t)= e^{-\alpha t}w(x,t)$.

\begin{claim*}       %this is the only claim in the proof of Theorem 15. I changed it to an un-numbered claim.
Let $c>0$.  The flow
\[
   t\in [0,T] \mapsto \{x: \tilde w(x,t)\ge c\} \tag{*}
\]
is a weak set flow in $Y\cup Z$. 
\end{claim*}

\begin{proof}[Proof of claim]
Suppose not. Then there is an interval $[a,b]\subseteq [0,T]$ and
a mean curvature flow $t\in [a,b]\mapsto S(t)$ of smooth closed surfaces in $Y\cup Z$ such
that $S(t)$ and~\thetag{*} are disjoint at time $t=a$ but not at
some later time $t\in [a,b]$. By replacing $[a,b]$ by a smaller interval, we can assume
that $S(t)$ and~\thetag{*} are disjoint for $t\in [a,b)$ but not for $t=b$.
Since $\Phi[\tilde w]<0$ where $\tilde w=c$ in $Z\times [0,T]$,
the flow
\[
  t\in [0,T] \mapsto \{x\in Z: \tilde w(x,t)\ge c\}
\]
is a weak set flow in $Z$.  (See Lemma~\ref{standard-lemma}\,\eqref{first-flow-item}.)
It follows immediately (see Remark~\ref{avoidance-remark}) that
\[
 S(b)\cap \{ \tilde{w}(x,t)\ge c\} \subseteq Y.
\]
But that is impossible according to Lemma~\ref{key-lemma} below.
This completes the proof of the claim.
\end{proof}
In the same way, if $c<0$, then
\begin{equation}\label{eq:other-side}
  t\in [0,T] \mapsto \{x: \tilde w(x,t)\le c\}
\end{equation}
is a weak set flow in $Y\cup Z$.
Since the union of weak set flows is a weak set flow, we see that
for $c>0$, 
\begin{equation}\label{eq:both}
\text{$t\in [0,T]\mapsto \{x: |\tilde w(x,t)|\ge c\}$ is a weak set flow in $Y\cup Z$.}
\end{equation}
(This flow is the union of the flows~\thetag{*} and~\eqref{eq:other-side}.)
Let $\eta$ be the minimum value of $|\tilde w|$  on $\partial (Y\cup Z)\times[0,T]$.  Then for
$0<c<\eta$,~\eqref{eq:both} implies that the flow
\begin{equation}\label{eq:big-barrier}
t\in [0,T]\mapsto \{x: |\tilde w(x,t)|\ge c\}\cup (Y\cup Z)^c = \{x: |\tilde w(x,t)| < c\}^c
\end{equation}
 is a weak set flow in $\RR^{n+1}$.
Let $M=M(0)$.  Since $F_t(M)$ and~\eqref{eq:big-barrier} are disjoint at time $0$, they remain 
disjoint for all $t\in [0,T]$.  Since this is true for all $c>0$,
\[
   F_t(M)\subseteq \{w(\cdot,t)=0\} = M(t)
\]
for all $t\in [0,T]$.
\end{proof}
\begin{lemma}\label{key-lemma}
Suppose that $Y$ is an open subset of a smooth Riemannian manifold.
Suppose that $w: Y\times [0,T]\to \RR$ is a continuous function such 
that for every $c$, 
\[
  t\in [0,T] \mapsto \{w(\cdot,t)=c\}
\]
is  weak set flow in $Y$.
Then for every $\alpha>0$ and for every $c>0$, the flow
\begin{equation*}\label{eq:first-set}
  t\in [0,T] \mapsto \{ e^{-\alpha t}w(\cdot,t) \ge c\}
\end{equation*}
is a weak set flow in $Y$.
Likewise, the flow
\begin{equation*}\label{eq:second-set}
   t\in [0,T] \mapsto \{ e^{-\alpha t} w(\cdot,t) \le -c\}
\end{equation*}
is a weak set flow in  $Y$.
\end{lemma}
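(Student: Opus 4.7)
The plan is to argue by contradiction: any putative failure of the weak set flow property for $t \mapsto \{e^{-\alpha t} w(\cdot,t) \ge c\}$ would produce a first time at which a smooth MCF touches this set, and at that moment one can pick a \emph{constant} level of $w$ that already serves as a barrier. So the hypothesis that each individual level $\{w(\cdot,t) = c^*\}$ is a weak set flow does all of the work.

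Concretely, I would fix $\alpha > 0$, $c > 0$, and a smooth MCF $t \in [a,b] \mapsto S(t)$ of closed hypersurfaces in $Y$ with $S(a) \cap \{e^{-\alpha a} w(\cdot, a) \ge c\} = \emptyset$, and aim to show $S(t) \cap \{e^{-\alpha t} w(\cdot,t) \ge c\} = \emptyset$ for every $t \in [a,b]$. Suppose not. Because $w$ is continuous and each $S(t)$ is compact, the function
\[
   f(t) \;=\; \max_{x \in S(t)} \bigl( e^{-\alpha t} w(x,t) - c \bigr)
\]
is continuous on $[a,b]$ with $f(a) < 0$. Hence there is a first time $t^* \in (a,b]$ with $f(t^*) = 0$, and correspondingly a point $x^* \in S(t^*)$ satisfying $w(x^*, t^*) = c\, e^{\alpha t^*}$.

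Next I would set $c^* := c\, e^{\alpha t^*}$ and invoke the hypothesis for the constant-level flow
\[
   t \in [a, t^*] \mapsto L(t) := \{x \in Y : w(x,t) = c^*\},
\]
which is a weak set flow in $Y$ by assumption. The key monotonicity is $c^* > c\, e^{\alpha a}$, a consequence of $\alpha > 0$ and $t^* > a$. Any $x \in S(a)$ therefore satisfies $w(x,a) < c\, e^{\alpha a} < c^*$, so $S(a) \cap L(a) = \emptyset$. Applying the weak set flow property of $L$ to the smooth MCF $S$ on $[a, t^*]$ yields $S(t^*) \cap L(t^*) = \emptyset$, which contradicts $x^* \in S(t^*) \cap L(t^*)$.

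The second conclusion follows by the same argument, with $c^* := -c\, e^{\alpha t^*}$ and the monotonicity $-c\, e^{\alpha t^*} < -c\, e^{\alpha a}$ replacing the previous one. I do not foresee any genuine obstacle; the only delicacy is extracting a \emph{first} time of contact $t^*$ at which the supremum is attained as an equality rather than a strict inequality, which uses only continuity of $w$ together with compactness of each $S(t)$.
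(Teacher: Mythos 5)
Your proof is correct and follows essentially the same strategy as the paper's: at the first time $t^*$ of contact with $S(\cdot)$, freeze the constant level $c^* = ce^{\alpha t^*}$ and use the hypothesis that $\{w(\cdot,t)=c^*\}$ is a weak set flow, with $\alpha>0$, $c>0$ supplying the strict monotonicity $ce^{\alpha a}<c^*$ that keeps $S(a)$ disjoint from $\{w(\cdot,a)=c^*\}$. The paper phrases the same argument by shrinking $[a,b]$ so that $b$ itself is the first touching time and setting $\hat c=ce^{\alpha b}$, but the content is identical.
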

\begin{proof}
It suffices to prove the first assertion, since the second assertion is the first assertion applied
to the function $-w$.
Let $Q(x,t)=e^{-\alpha t}w(x,t)$.  
Let $[a,b]\subseteq [0,T]$ and let $t\in[a,b]\mapsto S(t)$ be a smooth flow of closed surfaces.
Suppose that $Q(\cdot,a)<c$ on $S(a)$.  
We must show that $Q(\cdot,t)<c$ on $S(t)$ for all $t\in [a,b]$.
Suppose not. We may (by replacing $[a,b]$ by a shorter interval) assume that $b$ is the first time
when the inequality fails.  Thus
\begin{equation}\label{eq:Q-equation}
\begin{aligned}
&\text{$Q(x,t) = e^{-\alpha t} w(x,t) < c$ for all $x\in S(t)$ and $t\in [a,b)$}, \\
&\text{$Q(y,b) =  e^{-\alpha b} w(y,b) = c$ for some $y\in S(b)$.}
\end{aligned}
\end{equation}
Let $\hat{c}= e^{\alpha b}c$.  By~\eqref{eq:Q-equation},
\begin{equation}\label{eq:w-equation}
\begin{aligned}
  &w(x,t) < e^{\alpha t}c < e^{\alpha b}c =\hat{c} \qquad\text{for $t\in [a,b)$, $x\in S(t)$}, \\
  &w(y,b) = e^{\alpha b}c = \hat{c} \qquad\text{for some $y\in M(b)$.}
\end{aligned}
\end{equation}
But this contradicts the fact that $\{w=\hat{c}\}$ defines a weak set flow:
by~\eqref{eq:w-equation}, the flows $t\mapsto S(t)$ and $t\mapsto \{w(\cdot,t)=\hat{c}\}$ are disjoint
for $t\in [a,b)$ but not for $t=b$.
\end{proof}
\begin{lemma}\label{standard-lemma}
Let $Z$ be an open subset of Euclidean space
 and let $u: Z\times [0,T]\to \RR$ be a smooth function with non-vanishing gradient.
Let 
\[
 \Phi[u] = u_t - |\nabla u|\,\Div \left( \frac{\nabla u}{|\nabla u|} \right).
\]
Let $c\in \RR$.
\begin{enumerate}
\item\label{first-flow-item} The flow 
\[
 t\mapsto \{x\in Z: u(x,t)\ge c\}
\]
 is a weak set flow in $Z$ if and only $\Phi[u]\le 0$ at all spacetime points where $u=c$.
\item\label{second-flow-item} The flow 
\[
 t\mapsto \{x\in Z: u(x,t)\le c\}
\]
 is a weak set flow in $Z$ if and only $\Phi[u]\ge 0$ at all spacetime points where $u=c$.
\item\label{third-flow-item} The flow 
\[
 t\mapsto \{x: u(x,t)=c\}
\]
is a mean curvature flow if and only if $\Phi[u]=0$ at all spacetime points where $u=c$.
\end{enumerate}
\end{lemma}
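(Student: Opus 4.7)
The plan is to unpack the geometric meaning of the operator $\Phi$ and then transfer between smooth and weak barriers by a tangent-point (first-contact) comparison.

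Since $\nabla u$ does not vanish, each level set $\Sigma_c(t):=\{u(\cdot,t)=c\}$ is a smooth hypersurface with unit normal $\nu:=\nabla u/|\nabla u|$. Differentiating $u(x(t),t)=c$ along a normal motion $\dot x=v\nu$ gives $v=-u_t/|\nabla u|$, and a standard computation identifies the mean curvature of $\Sigma_c(t)$ in the $\nu$-direction (sign convention $\vec H=-H(\nu)\nu$, so that a convex surface with outward normal has $H(\nu)>0$) as $H(\nu)=\Div(\nabla u/|\nabla u|)$. The smooth MCF condition $v=-H(\nu)$ on $\Sigma_c(t)$ is then precisely $\Phi[u]=0$, which proves~(3).

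For the ``if'' direction of~(1), assume $\Phi[u]\le 0$ on $\{u=c\}$ and let $t\in[a,b]\mapsto S(t)\subset Z$ be a smooth closed MCF with $u(\cdot,a)<c$ on $S(a)$. Replacing $u$ by $u+\eps t$ and sending $\eps\downarrow 0$, we may assume the strict inequality $\Phi[u]<0$ on $\{u=c\}$. Suppose for contradiction that $t_0\in(a,b]$ is a first touching time with $u(x_0,t_0)=c$ for some $x_0\in S(t_0)$. Then $u|_{S(t_0)}$ attains its maximum at $x_0$, so $\nabla u(x_0,t_0)$ is normal to $S(t_0)$; orient $\nu_S:=\nu$. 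Because $S(t_0)$ lies locally on the $\{u\le c\}$ side of $\Sigma_c(t_0)$, writing both surfaces as graphs over the common tangent plane and comparing Laplacians at the touching point yields $H_{\Sigma_c}(\nu)\le H_S(\nu_S)$. Separately, differentiating the scalar function $t\mapsto u(x(t),t)$ along a smooth MCF path $x(\cdot)\subset S(\cdot)$ through $(x_0,t_0)$ gives, at $t_0$, the value $u_t-|\nabla u|H_S(\nu_S)$, which is $\ge 0$ since this function is $\le c$ for $t\le t_0$ with equality at $t_0$. Combining with $u_t<|\nabla u|H_{\Sigma_c}(\nu)$ (from $\Phi[u]<0$) yields $H_S(\nu_S)<H_{\Sigma_c}(\nu)$, contradicting the previous inequality.

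For the ``only if'' direction, suppose $\Phi[u](x_0,t_0)>0$; by continuity the inequality persists in a spacetime neighborhood of $(x_0,t_0)$. Run the first-contact computation in reverse: let $S(\cdot)$ be the smooth MCF starting from a closed surface that near $x_0$ coincides with a piece of the level set $\{u(\cdot,t_0-\delta)=c-\eta\}$ and is patched away from $x_0$ to lie inside $\{u(\cdot,t_0-\delta)<c\}$. For $\eta,\delta>0$ chosen in the right ratio (as one verifies directly in the shrinking-sphere model case and then transfers by perturbation), $S(t_0)$ meets $\{u(\cdot,t_0)\ge c\}$, contradicting the weak set flow property of $\{u\ge c\}$. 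Statement~(2) follows by applying~(1) to $-u$ with level $-c$. The main subtlety is the tangential first-contact case in the ``if'' direction, where the mean-curvature inequality and the sign of $\Phi[u]$ are both only weak; the perturbation $u\mapsto u+\eps t$ produces a strict sign without altering the hypothesis in the limit, circumventing any appeal to the strong maximum principle.
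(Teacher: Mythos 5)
Your identification of the velocity and mean curvature of the level sets, and hence the equivalence of the MCF condition with $\Phi[u]=0$, matches the paper's computation exactly (see equations~\eqref{eq:velocity}--\eqref{eq:H}). Your tangential first-contact argument is also correct \emph{in the strict case} $\Phi[u]<0$, and this is in fact the only case the paper ever applies (see Remark~\ref{avoidance-remark}). The trouble is the step you use to reduce the non-strict case $\Phi[u]\le 0$ to the strict case.

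First, a sign error: $\Phi[u+\eps t]=\Phi[u]+\eps$, so replacing $u$ by $u+\eps t$ moves $\Phi$ in the \emph{wrong} direction; you would want $u-\eps t$. Second, and more seriously, even with the correct sign the perturbation does not do what you claim. You only know $\Phi[u]\le 0$ on the set $\{u=c\}$, whereas the relevant first-contact point for the perturbed function lies on $\{u-\eps t=c\}=\{u=c+\eps t\}$, a different level set on which you have no control of the sign of $\Phi[u]$. There is no obvious way to choose $\eps$ small to make this work, because the neighborhood of $\{u=c\}$ on which $\Phi[u]<\eps$ shrinks as $\eps\to0$, while the displaced level set lies at distance comparable to $\eps$. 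Finally, even if the perturbed avoidance held, sending $\eps\downarrow0$ only yields the non-strict inequality $u\le c$ on $S(t)$, which is not disjointness from $\{u(\cdot,t)\ge c\}$. The paper avoids all of this: since $\{u(\cdot,t)\ge c\}$ is a smooth region whose boundary moves smoothly and no slower than MCF in the inward direction, avoidance in the equality case follows from the strong (parabolic) maximum principle for smooth flows, not from a bare first-touching argument. If you wish to argue from scratch, this is the ingredient you need in the boundary case; your perturbation does not substitute for it. Your sketch of the ``only if'' direction is also too vague to count as a proof, though it is the less important direction for the paper's purposes.
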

\begin{proof}
The normal velocity at $(x,t)$ of the moving surfaces $\tau\mapsto \{ u(\cdot,\tau)=c\}$
is
\begin{equation}\label{eq:velocity}
  \vv(x,t)= \frac{-\nabla u}{|\nabla u|^2} u_t.
\end{equation}
The mean curvature $H(x,t)$ of $\{u(\cdot,t)=c\}$ at $x$ is
\begin{equation}\label{eq:H}
 H = -\Div\left( \frac{\nabla u}{|\nabla u|} \right) \, \frac{\nabla u}{|\nabla u|}.
\end{equation}
\newcommand{\nn}{\mathbf{n}}
Let $\nn=\frac{\nabla u}{|\nabla u|}$ be the unit normal vector to $\{u(\cdot,t)=c\}$
that points into $\{u(\cdot,t)>c\}$.  Then from~\eqref{eq:velocity} and~\eqref{eq:H}, we see that
\begin{equation}\label{eq:fast}
  \vv\cdot \nn \ge H\cdot \nn
\end{equation}
if and only if
\[
   -\frac{u_t}{|\nabla u|} \ge -\Div\left( \frac{\nabla u}{|\nabla u|} \right),
\]
i.e., if and only if $\Phi[u]\le 0$.   This proves (1).
Assertions (2) and (3) are proved in the same way. (Alternatively, (2) follows by applying (1)
to the function $-u$, and (3) follows immediately from (1) and (2).)
\end{proof}
\begin{remark}\label{avoidance-remark}
Note that in case (1) of the lemma, the set 
\[
\{x\in Z: u(x,t)\ge c\} \tag{*}
\]
is a smooth manifold-with-boundary in 
the space $Z$, and the boundary $\{x\in Z: u(x,t)=c\}$ 
is moving smoothly.  Thus, assuming $\Phi[u]\le 0$, if $t\in [0,T]\mapsto S(t)$ is a smooth mean curvature flow of 
hypersurfaces properly embedded in $Z$, and if $S(t)$ and $\thetag{*}$ are disjoint for $t\in [0,T)$,
then they are also disjoint at time $T$
 by the strong maximum principle for smooth flows. (See~\eqref{eq:fast}.)
If the strict inequality $\Phi[u]<0$ holds, which is the case when we apply Lemma~\ref{standard-lemma}
 in the proof of Theorem~\ref{criterion-theorem}, 
 then we have strict inequality in~\eqref{eq:fast}, which
 immediately implies disjointness of $S(t)$ and \thetag{*} at time $T$.
 (This is the most elementary case of the maximum principle.)
\end{remark}
\section{Mean convex blow up type implies mean convex type}\label{blowupimplies}
In this section we prove Theorem \ref{alt}, which shows that a singular point of mean convex (or mean concave) blow-up type is of mean convex (or mean concave) type.
\begin{proof}[Proof of Theorem~\ref{alt}]
Suppose that $(x_0,t_0)$ is of mean convex blowup type. (The proof in the mean concave case is 
essentially the same.)
If $(x,t)$ is a regular point of the flow, let $H(x,t)$ be the mean curvature in the direction
of the unit normal that points into $U(t)$ and let $R(x,t)$ be
the supremum of $r>0$ such that the flow is smooth in
\[
      \BB(x,r) \times [t-r^2,t]
\]
and such that the norm of the second fundamental form is bounded by $1/r$ in that set.
We assert that there exist an $\eps>0$ and an $a>0$ such that
at each point $(x,t)$ of the flow in the set
$
  W:= \BB(x_0,\eps)\times [t_0-\eps^2,t_0]
$,
\begin{equation}\label{gauss} 
\text{The Gaussian density at $(x,t)$ is less than $2$},
\end{equation}
and
\begin{equation}\label{HR}  
\text{If $(x,t)$ is a regular point, then $H(x,t)R(x,t) \ge a$.}
\end{equation}
To see that we can choose such $\eps$ and $a$, note that~\eqref{gauss} holds (for small $\eps$)
because the Gaussian density at $(x_0,t_0)$ is less than $2$ (since the tangent flow is a shrinking
sphere or cylinder) and because Gaussian density is upper semicontinuous.
To see~\eqref{HR}, let $\alpha(k)$ be the infimum of $H(x,t)R(x,t)$
among regular points $(x,t)$ in $\BB(x_0,1/k)\times [-1/k^2,0]$,
let $\alpha=\lim_k\alpha(k)$, and 
let $(x_k,t_k)$ be  regular
points of the flow converging to $(x_0,t_0)$ with $t_k\le t_0$ such that
\[
   H(x_k,t_k)R(x_k,t_k)\to \alpha.
\]
By passing to a subsequence (see Definition~\ref{blow-up-type}\,\eqref{blow-up-type-2}),
 we can assume that the flows
%the quantity $Q_k:=|A(x_k,t_k)|$ tends to infinity, and, after passing to a subsequence, the flows
\[
   t \in [-Q_k^2\, t_k,0] \mapsto Q_k(U(t_k + Q_k^{-2}t)-x_0) \\
\]
(with $Q_k=|A(x_k,t_k)|$) 
 converge smoothly to a flow $t\in(-\infty,0]\mapsto N_t$ where the $N_t$
 are convex regions with smooth boundaries. 
Let $H$ be the mean cuvature of $N_0$ at the origin (which is nonzero since the norm
of the second fundamental form there is $1$ and since $N_0$ is convex).
Let $R$ be the supremum of radii $r>0$ such that the norm of the second fundamental
form in $N_t\cap \BB(0,r)$ is bounded by $1/r$ for $t\in [-r^2,0]$.  By smoothness, $R>0$.
By the smooth convergence,
\[
 \alpha= \liminf_{k\to\infty} H(x_k,t_k)R(x_k,t_k) = HR >0.
\]
Now let $0<a<\alpha$ (e.g., $a=\alpha/2$).  It follows immediately
that~\eqref{HR} holds if $\eps>0$ is sufficiently small.

\setcounter{claim}{0}
\begin{claim}\label{sphere/cylinder}
At every singular point in $W$, each tangent flow is a multiplicity-one shrinking
sphere or cylinder.
\end{claim}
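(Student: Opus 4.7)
The plan is to fix an arbitrary singular point $(y,s)\in W$ and an arbitrary tangent flow $\mathcal{T}$ at $(y,s)$, then exploit the two parabolically scale-invariant bounds \eqref{gauss} and \eqref{HR} to force $\mathcal{T}$ to be a multiplicity-one shrinking sphere or cylinder.

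First I would use Huisken's monotonicity formula to deduce that $\mathcal{T}$ is a self-shrinking Brakke flow and that its Gaussian density at the spacetime origin equals the Gaussian density at $(y,s)$, which is strictly less than $2$ by \eqref{gauss}. Since a multiplicity-$k$ hyperplane carries density exactly $k$ at its center, and since any non-planar shrinker already has density strictly larger than $1$, this bound rules out multiplicity $\geq 2$, so $\mathcal{T}$ is a multiplicity-one shrinker. Next, because the product $H(x,t)R(x,t)$ is invariant under parabolic rescaling $(x,t)\mapsto(\lambda x,\lambda^2 t)$, the inequality \eqref{HR} descends to $\mathcal{T}$: every regular point of $\mathcal{T}$ satisfies $HR\ge a>0$, and in particular $H>0$ with a consistent orientation. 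I would then invoke the classification of smooth, complete, embedded, mean convex self-shrinkers---originally due to Huisken and extended by Colding--Minicozzi \cite{cold_min}---to conclude that wherever $\mathcal{T}$ is smooth, it coincides with a generalized cylinder $\SS^k\times\RR^{n-k}$ (the case $k=n$ being the round sphere).

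The main obstacle is that a priori $\mathcal{T}$ may itself have a singular set, in which case the classification cannot be applied directly. I would address this by iterating the blow-up procedure: both \eqref{gauss} and \eqref{HR} are preserved under any further tangent-flow limit taken at a singular point of $\mathcal{T}$, while Huisken's monotonicity forces the Gaussian density to drop strictly at each such non-trivial iteration. A standard parabolic dimension reduction of Federer type then produces, in finitely many steps, a smooth, multiplicity-one, strictly mean convex self-shrinker, which by the classification is a sphere or cylinder. Because these model shrinkers are themselves smooth and rigid, a stratification of the singular set of $\mathcal{T}$ by density would then force that singular set to be empty, and $\mathcal{T}$ itself would be revealed to be a multiplicity-one shrinking sphere or cylinder, as claimed.
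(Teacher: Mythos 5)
Your opening moves match the paper: the Gaussian density bound \eqref{gauss} rules out multiplicity $\ge 2$ and gives that the multiplicity-one regular points of the tangent flow $V$ are dense, the scale-invariance of $HR$ transfers \eqref{HR} to the regular points of $V$, and, once $V$ is known to be a smooth multiplicity-one shrinker with $H\ge 0$, you invoke the same classification \cite[Thm.~10.1]{CM_generic} that the paper does. You also correctly flag the real obstacle: a priori $V$ could itself be singular.

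The gap is in how you propose to close that obstacle. The plan ``iterate tangent flows, do Federer dimension reduction, then stratify by density to force the singular set of $V$ to be empty'' is not actually an argument. Dimension reduction bounds the dimension of the singular set; it does not make it empty. Knowing that every iterated blow-up is a smooth sphere or cylinder does not by itself preclude a singular point in $V$ -- compare minimal surface theory, where a cone can be smooth and rigid away from its vertex and still be singular there. Also, ``Gaussian density drops strictly at each non-trivial iteration'' is false when one blows up along the spine of a cylinder; this is precisely why dimension reduction gives a dimension bound rather than smoothness. So your proposal, as written, does not prove the claim.

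The paper closes the gap far more directly, with no iteration, by exploiting the shrinker equation quantitatively. At a regular point $y$ of the time $-1$ slice of $V$, \eqref{HR} gives $R(y)\ge a/H(y)$, i.e.\ the flow is smooth with $|A|\le H(y)/a$ on $\BB(y,a/H(y))$. But on a self-shrinker $H=\tfrac12\langle y,\nu\rangle$, so $H(y)\le |y|/2$ and hence $R(y)\ge 2a/|y|$: the regularity scale at every regular point is bounded below by a quantity that depends only on $|y|$ and does not degenerate near any fixed point of $\spt V$. Since regular points are dense, every point of $\spt V$ is in the interior of one of these uniformly sized smooth balls, so $V$ is smooth (and multiplicity one) everywhere. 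Then $H\ge 0$ from \eqref{HR} and the classification finish the proof. You should replace your dimension-reduction sketch with this direct regularity-scale argument.
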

To prove Claim~\ref{sphere/cylinder},
let $t\mapsto (|t|^{1/2})_\#V$ be a tangent flow at a singular point in $W$.
By~\eqref{gauss}, the multiplicity-one regular points are dense in the tangent flow.
 By the local regularity theorem~\cite{white_regularity}, the convergence
of the dilated flows to the tangent flow is smooth in a spacetime neighborhood of each
regular point
of the tangent flow.   Thus by~\eqref{HR},
 if $y$ is a regular point of $V$ with mean curvature $H$, then
$V$ is smooth in $\BB(y, a H^{-1})$.  (Indeed, the norm of the second fundamental form is bounded
by $a^{-1}H$ in that ball.)  Since $H\le |y|/2$, $V$ is smooth in $\BB(y, 2a\,|y|^{-1})$.  Since such regular
points are dense in $\spt(V)$, $V$ is smooth everywhere.
By choice of $\eps$ (see~\eqref{HR}), the mean curvature of $V$ is everywhere $\ge 0$.  
Hence $V$ is a shrinking sphere or cylinder \cite[Thm. 10.1]{CM_generic}.
\begin{claim}\label{weak-containment}
If $t_0-\eps < t_1 < t_2 \le t_0$, then
\[
    U(t_2)\cap \BB(x_0,\eps) \subseteq U(t_1).
\]
\end{claim}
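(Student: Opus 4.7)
The plan is to argue by contradiction. Suppose the claim fails: there exist $t_1 < t_2$ in $(t_0 - \eps, t_0]$ and a point $y \in U(t_2) \cap \BB(x_0, \eps)$ with $y \notin U(t_1)$. Set
\[
t^* := \inf\{t \in [t_1, t_2] : y \in U(t)\}.
\]
Since $\UU$ is closed in spacetime, $y \in U(t^*)$, while by definition $y \notin U(t)$ for every $t \in [t_1, t^*)$. Consequently $(y, t^*) \in \partial \UU$, so $y \in M(t^*)$; after shrinking $\eps$ if necessary so that $(t_0 - \eps, t_0] \subseteq [t_0 - \eps^2, t_0]$, the point $(y, t^*)$ lies in $W$. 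The strategy is to show that in both the regular and singular cases $y \in U(t)$ for some $t < t^*$, contradicting the definition of $t^*$.

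If $(y, t^*)$ is regular, then by \eqref{HR} we have $H(y, t^*) \ge a / R(y, t^*) > 0$, and by our convention $H$ points into $U(t^*)$. In a small spacetime neighborhood of $(y, t^*)$ the flow is a smooth mean curvature flow of embedded hypersurfaces, so $M(t)$ is locally a smooth graph moving inward with positive normal speed $H$. Hence for $t$ slightly less than $t^*$ the surface $M(t)$ sits strictly on the outward side of $y$, which forces $y \in \interior(U(t))$ for such $t$ --- the desired contradiction.

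If instead $(y, t^*)$ is singular, Claim~\ref{sphere/cylinder} asserts that every tangent flow at $(y, t^*)$ is a multiplicity-one shrinking round sphere or cylinder centered at the origin. Fix such a tangent flow, realised as a limit of parabolic rescalings $s \mapsto \lambda_k(U(t^* + \lambda_k^{-2} s) - y)$ along some $\lambda_k \to \infty$. Multiplicity one and smoothness of the limit shrinker, together with the Gaussian density bound~\eqref{gauss}, allow us to invoke Brakke--White local regularity~\cite{white_regularity} and upgrade the Brakke convergence to smooth convergence of the boundary hypersurfaces on compact subsets of the smooth part of the shrinker; this in turn gives Hausdorff convergence of the enclosed rescaled regions $\lambda_k(U(t^* + \lambda_k^{-2} s) - y)$ to the convex region $U_{\mathrm{tan}}(s)$ bounded by the shrinking sphere or cylinder. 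For every $s < 0$, the origin lies in the interior of $U_{\mathrm{tan}}(s)$, so for fixed $s < 0$ and all sufficiently large $k$, the origin lies in $\lambda_k(U(t^* + \lambda_k^{-2} s) - y)$ --- equivalently $y \in U(t^* + \lambda_k^{-2} s)$. Since $t^* + \lambda_k^{-2} s < t^*$, this again contradicts $y \notin U(t)$ for $t \in [t_1, t^*)$.

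The main subtle point is the transition, in the singular case, from Brakke/varifold convergence of the rescaled boundary hypersurfaces to set-theoretic Hausdorff convergence of the enclosed open regions at a fixed rescaled time $s < 0$, which is what actually places $y$ strictly inside $U(t)$ for some $t < t^*$. This relies on multiplicity one and smoothness of the sphere/cylinder shrinker, together with~\eqref{gauss}, which trigger smooth convergence of the rescaled boundary surfaces via~\cite{white_regularity}.
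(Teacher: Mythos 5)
Your approach diverges from the paper's in a way that leaves a real gap. You take $t^*$ to be the first time $y$ enters $U(\cdot)$, which forces you to handle the possibility that $(y,t^*)$ is singular by passing to the tangent flow there. The paper instead fixes a small $\delta$ with $0<\delta<\dist(y,U(t_1)\cup\partial\BB(x_0,\eps))$ and takes $t$ to be the first time $\dist(y,U(\cdot))$ drops to $\delta$; at that time the nearest point $p\in\partial U(t)$ to $y$ has a tangent flow lying in the halfspace $\{x:x\cdot(y-p)\le 0\}$, and since a multiplicity-one shrinking sphere or cylinder centered at the origin does not lie in any halfspace through the origin, Claim~\ref{sphere/cylinder} forces $(p,t)$ to be \emph{regular}. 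This device guarantees the singular case never arises, and the singular case is exactly where your argument is incomplete.

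The incomplete step is the assertion that smooth convergence of the rescaled boundary hypersurfaces ``in turn gives Hausdorff convergence of the enclosed rescaled regions'' $\lambda_k(U(t^*+\lambda_k^{-2}s)-y)$ to the bounded convex component of the shrinker's complement. Smooth boundary convergence alone does not say which of the two complementary components the rescaled regions occupy. If they were locally the \emph{outside} of the collapsing neck, then the origin would lie outside them for every $s<0$ and no contradiction would result---and that is precisely what your standing hypothesis $y\notin U(\tau)$ for $\tau<t^*$ suggests, so the argument as written comes close to assuming what it must rule out. To close the gap you need to bring in the sign condition~\eqref{HR}: near $(y,t^*)$ the mean curvature of $\partial U(t)$ points into $U(t)$, the mean curvature of a shrinking sphere/cylinder points into its bounded component, and multiplicity-one smooth convergence preserves this co-orientation, so the rescaled regions must lie on the bounded side. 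Your closing paragraph flags that a transition from varifold to set convergence is needed, but stops at ``smooth convergence of the rescaled boundary surfaces,'' which is exactly where the orientation information is still missing. Your regular case, by contrast, is fine and is essentially what the paper does at its contact point $p$.
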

To prove Claim~\ref{weak-containment}, 
suppose to the contrary that there is point $y\in U(t_2)\cap\BB(x_0,\eps)$ that is not in $U(t_1)$.
Let
\[
  0< \delta < \dist(y, U(t_1) \cup \partial \BB(x_0,\eps)).
\]
Let $t\in (t_1,t_0)$ be the first time $\ge t_1$ such that 
\[
  \dist(y, U(t)) = \delta.
\]
Let $p\in U(t)$ be a point such that $\dist(y,p)=\dist(y,U(t))$.
Note that the tangent flow at $(p,t)$ lies in a halfspace (namely the halfspace $\{x: x\cdot (y-p) \le 0\}$).
Hence $(p,t)$ is a regular point of the flow by Claim~\ref{sphere/cylinder}.
Now the mean curvature at $(p,t)$ is nonzero and points into $U(t)$, i.e., in the direction of $p-y$.
It follows that for $\tau<t$ very close to $t$, $\dist(y, U(\tau)) < \delta$, a contradiction.
This completes the proof of Claim~\ref{weak-containment}.

It remains only to show that for $t_0-\eps^2 \le t_1 < t_2 \le t_0$, if 
\[
   x\in \BB(x_0,\eps)\cap U(t_2),
\]
then $x$ is in the interior of $U(t_1)$.
If $x$ is the interior of $U(t_2)$, then (by Claim~\ref{weak-containment}) it is in the interior of $U(t_1)$.
Thus we may assume that $x$ is in the boundary of $U(t_2)$.
For $\tau \in [t_1,t_2)$ sufficiently close to $t_2$, $x$ is in the interior of $U(\tau)$.
If $(x,t_2)$ is a regular point, this is because the mean curvature is nonzero and points
into $U(t_2)$.  If $(x,t_2)$ is a singular point, this is true by Claim~\ref{sphere/cylinder}.
Since $x$ is in the interior of $U(\tau)$ and since $U(\tau)\subseteq U(t_1)$, it
follows that $x$ is in the interior of $U(t_1)$.  This completes the proof of Theorem~\ref{alt}.
\end{proof}
\appendix
\renewcommand{\thetheorem}{\thesection\arabic{theorem}}
\setcounter{theorem}{0}
\section{Weak set flows}
In this appendix, we collect some results on weak set flows.
\begin{definition}\label{weak-set-flow-definition}
Let $W$ be an open subset of a Riemannian manifold and $I\subseteq \RR$ be an interval.
A family
\[
  t\in I\mapsto M(t)
\]
of subsets of $W$ is called
a {\bf weak set flow} in $W$ provided:
\begin{enumerate}
\item $\{(x,t): t\in I, \, x\in M(t)\}$ is a relatively closed subset of $W\times I$.
\item If $[a,b]\subseteq I$, if $t\in [a,b]\mapsto S(t)\subseteq W$ is a classical mean
curvature flow of smooth, closed hypersurfaces, and if $S(a)$ is disjoint from $M(a)$,
then $S(t)$ is disjoint from $M(t)$ for all $t\in [a,b]$.
\end{enumerate}
\end{definition}
In~\cite{white_topology}, the definition of weak set flow is slightly more complicated because
it generalizes the notion of mean curvature flow of smooth surfaces with boundary,
whereas in this paper we are concerned with flow of surfaces without boundary.
\begin{theorem}\label{maximum-principle-theorem}
Suppose that
\begin{equation*}
   t\in [0,T] \mapsto M_i(t) \tag{*}
\end{equation*}
is a weak set flow in $U$ for $i=1,2$, where $U$ is the interior of a compact
subset of smooth Riemannian manifold $N$.
Let 
\[
  \MM_i = \{(x,t): t\in [0,T], \, x\in M_i\}
\]
be the spacetime set swept out by the flow~\thetag{*}.
Suppose also that 
\[
  \overline{\MM_1}\cap \overline{\MM_2}
\]
is a compact subset of $U\times (0,T]$.
Then $\MM_1$ and $\MM_2$ are disjoint.
\end{theorem}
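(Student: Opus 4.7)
The plan is to argue by contradiction, using the avoidance principle satisfied by each weak set flow against classical smooth closed mean curvature flows. Assume $\MM_1\cap\MM_2\ne\emptyset$. Because each $\MM_i$ is relatively closed in $U\times[0,T]$, so is $\MM_1\cap\MM_2$, and by hypothesis it is contained in the compact set $\overline{\MM_1}\cap\overline{\MM_2}\subseteq U\times(0,T]$. The set of contact times $\{t\in[0,T]:M_1(t)\cap M_2(t)\ne\emptyset\}$ is therefore a closed subset of $[0,T]$ missing $t=0$ (no point of the overlap sits at time zero), and hence admits a strictly positive minimum $t_0$. Fix a contact point $x_0\in M_1(t_0)\cap M_2(t_0)$. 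For every $t\in[0,t_0)$ the compact sets $M_1(t), M_2(t)$ are disjoint, so $\rho(t):=\dist(M_1(t),M_2(t))>0$, while $\rho(t)\to 0$ as $t\uparrow t_0$ by the closedness of $\MM_1,\MM_2$.

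The plan is to interpose a classical smooth mean curvature flow of closed hypersurfaces between $M_1$ and $M_2$ just before $t_0$, using shrinking spheres as the simplest explicit barriers, and then appeal to the weak set flow hypothesis applied separately to $M_1$ and to $M_2$. Concretely, for $t\in[0,t_0)$ near $t_0$, I would pick a midpoint $p_t$ of a minimizing segment realizing $\rho(t)$ (chosen near $x_0$), and consider the ball $B(p_t,\rho(t)/4)$, whose boundary is disjoint from both $M_1(t)$ and $M_2(t)$. Evolving this boundary as a classical shrinking sphere yields a smooth closed MCF of lifetime $\sim \rho(t)^2/(8n)$, and by the weak set flow property both $M_1$ and $M_2$ are forbidden from entering its interior for as long as it persists. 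Iterating along a sequence $t_k\uparrow t_0$, with $p_k\to x_0$ by compactness in $U$, one wants to arrange that the shrinking balls collapse onto $(x_0,t_0)$ in such a way that $x_0$ is separated from at least one of $M_1(t_0),M_2(t_0)$ by the accumulated avoidance constraints, contradicting $x_0\in M_1(t_0)\cap M_2(t_0)$.

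The principal obstacle is a rate-matching issue: the lifetime $\rho(t_k)^2/(8n)$ of the interposed shrinking sphere must at least match the remaining time $t_0-t_k$ for the sphere to still be an effective barrier at the contact moment, so one needs to extract a subsequence along which $\rho(t_k)^2\gtrsim t_0-t_k$. The key observation enabling such extraction is the \emph{two-sided} trapping: neither $M_1$ nor $M_2$ may enter the sphere, so if $\rho(t)$ were to collapse faster than $\sqrt{t_0-t}$, the contact at $x_0$ would have to form ``out of nothing'' between the two disallowed regions, giving the contradiction at the limit level. Making this selection rigorous — essentially by a diagonal compactness argument on the pair $(p_k, \rho(t_k))$ combined with the avoidance-propagated constraints — is the quantitative core of the proof, and is precisely what distinguishes this maximum principle for weak set flows from the classical one between smooth flows, where monotonicity of the minimal distance is automatic.
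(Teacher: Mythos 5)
Your proposal and the paper's proof take completely different routes. The paper disposes of the theorem in a few lines by reduction: it views $\overline{\MM_i}$ as a weak set flow with ``boundary'' $\Gamma_i := \overline{\MM_i}\setminus \MM_i$ (in the sense of \cite{white_topology}, where such flows are generated by a prescribed spacetime set, here $\Gamma_i \cup (M_i(0)\times\{0\})$); the hypothesis that $\overline{\MM_1}\cap\overline{\MM_2}$ is a compact subset of $U\times(0,T]$ exactly guarantees that these generating sets stay away from the potential contact, so Theorem~7.1 of that paper applies directly. You instead try to re-prove a general avoidance principle for weak set flows from scratch using shrinking spheres as barriers, which is not what the paper does.

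Your direct argument contains a gap you yourself flag as ``the quantitative core,'' and it is genuine, not merely technical. A shrinking sphere of radius $\rho(t_k)/4$ placed at time $t_k$ only survives for a time of order $\rho(t_k)^2$; it is a useful barrier at the contact time $t_0$ only if $\rho(t_k)^2 \gtrsim t_0 - t_k$ along the chosen subsequence. But your justification for extracting such a subsequence --- that if $\rho$ collapsed faster, ``the contact at $x_0$ would have to form out of nothing between the two disallowed regions'' --- is circular: the disallowed region at time $t_0$ exists only if the sphere has survived to time $t_0$, which is precisely the rate bound you are trying to establish. The sphere placed at $t_k$ says nothing at all about times beyond $t_k + \rho(t_k)^2/(32n)$, so if $\rho$ decays superparabolically, the collection of sphere barriers simply never reaches $t_0$ and no contradiction is obtained. (Two further points worth noting: the claim that $\rho(t)\to 0$ as $t\uparrow t_0$ does not follow from closedness of the spacetime tracks alone, as you assert --- new components of a weak set flow can appear at positive distance from the earlier support --- and in a curved ambient manifold geodesic spheres do not shrink by exact mean curvature flow, so one would need approximate barriers with one-sided curvature control, an extra layer your sketch does not address.) These are exactly the obstructions that make a from-first-principles proof nontrivial, and they are why the paper instead reduces to the already-established maximum principle for weak set flows with boundary.
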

\begin{proof}
Let $\Gamma_i = \overline{\MM_i}\setminus \MM_i$.
One can think of the flow~\thetag{*} as a flow of (generalized) surfaces-with-boundary in $N$,
where the boundary at time $t$ is $\{x: (x,t)\in \Gamma_i\}$.
In the terminology of~\cite{white_topology}, 
\[
 t \in [0,T] \mapsto \{x: (x,t)\in \overline{\MM_i}\}
\]
is a weak set flow in $N$ generated
by the spacetime set $\Gamma_i\cup (M_i(0)\times \{0\})$.  Theorem~\ref{maximum-principle-theorem}
 is a special case of Theorem 7.1 of that paper.
\end{proof}
Given a relatively closed set $M$ of $W$, there is a (unique) weak set flow
\[
  t\in [0,\infty)\mapsto F_t(M)  
\]
in $W$ for which $F_0(M)=M$ and for which the following
property holds: if $t\in [0,T]\mapsto S(t)$ is any weak set flow in $W$ with $S(0)\subseteq M$,
then $S(t)\subseteq F_t(M)$ for all $t\in [0,T]$.   The flow $t\in[0,\infty)\mapsto F_t(M)$
is the {\bf level set flow} starting at $M$.
\begin{proposition}\label{inner_most_prop}
Suppose that $U$ is any closed region in a Riemannian manifold $N$.
Let
\[
  \UU := \{(x,t): t\ge 0, \,\, x\in F_t(U) \}
\]
be the spacetime region swept out by $t\mapsto F_t(U)$, and let
\[
  M(t) = \{x: (x,t)\in \partial \UU\}.
\]
Then $t\mapsto M(t)$ is a weak set flow.
\end{proposition}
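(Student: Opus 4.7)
The plan is to verify the two defining conditions of a weak set flow for $t\mapsto M(t)$: closedness of the spacetime track, and the avoidance principle.

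\emph{Closedness.} The spacetime track of $t\mapsto M(t)$ is precisely $\partial\UU$, the relative boundary of $\UU$ in $N\times[0,\infty)$. Since $\UU$ is closed in the spacetime half-space (being the spacetime track of the level set flow of a closed set), its topological boundary $\partial\UU$ is closed as well, verifying condition~(1).

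\emph{Avoidance.} Let $t\in[a,b]\mapsto S(t)$ be a classical smooth MCF of closed hypersurfaces with $S(a)\cap M(a)=\emptyset$. The key structural observation is that $(N\times[0,\infty))\setminus\partial\UU$ decomposes as a disjoint union of two open sets, $\mathrm{int}(\UU)\sqcup\UU^c$. Since smooth MCF of closed hypersurfaces preserves the number of connected components, each component $C$ of $S(a)$ yields a well-defined component $C(t)$ of $S(t)$, and by connectedness $C$ lies at time $a$ entirely in $\mathrm{int}(\UU)_a$ or entirely in $\UU^c_a$. In the first subcase ($C\subseteq \UU^c_a$), the weak set flow property of $\UU$ (which is the level set flow of $U$) yields $C(t)\cap\UU_t=\emptyset$ for all $t\in[a,b]$, and since $M(t)\subseteq\UU_t$ we conclude $C(t)\cap M(t)=\emptyset$.

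In the second subcase ($C\subseteq\mathrm{int}(\UU)_a$), I bring in the dual level set flow $\UU'$ of $\overline{U^c}$. Two facts are central: (i) the covering property $F_t(U)\cup F_t(\overline{U^c})=N$ for every $t\ge 0$, which follows from the viscosity-solution representation of level set flow where both sets arise as super- and sub-level sets of a single continuous function; and (ii) its consequence, by closedness of $\UU'$, that $\overline{\UU^c}\subseteq\UU'$, hence $\partial\UU\subseteq\UU\cap\UU'$ and in particular $M(t)\subseteq\UU'_t$. At $a=0$ the condition $C\subseteq\mathrm{int}(U)=N\setminus\overline{U^c}$ gives $C\cap\UU'_0=\emptyset$, and avoidance for $\UU'$ yields $C(t)\cap\UU'_t=\emptyset$ and therefore $C(t)\cap M(t)=\emptyset$. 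For general $a>0$ one reduces to the $a=0$ situation by restarting the level set flow at time $a$ from initial data $F_a(U)$, using the semigroup property $F_t = F_{t-a}\circ F_a$.

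The main obstacle will be handling the $\mathrm{int}(\UU)$-case uniformly in $a$: at times where the level set flow has fattened, $\mathrm{int}(\UU)$ and $\UU'$ can overlap (the fattened region sits in both), so the disjointness $C\cap\UU'_a=\emptyset$ is not automatic from $C\subseteq\mathrm{int}(F_a(U))$ alone. The resolution exploits that $M(a)$ is the \emph{full} slice of the spacetime boundary $\partial\UU$---strictly larger than the topological boundary $\partial F_a(U)$ at the (countable set of) fattening times---and that this strengthening of the hypothesis $S(a)\cap M(a)=\emptyset$ precisely rules out the problematic configurations in which $C$ would meet $\UU'_a$ outside of $\partial\UU$.
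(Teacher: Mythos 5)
Your decomposition of the spacetime complement of $\partial\UU$ into $\operatorname{interior}(\UU)$ and $\UU^c$, and your handling of the exterior case, are correct. The gap is in the interior case, and it is exactly the one your last paragraph flags and then asserts away without an argument. Concretely, you need that $C\subseteq\operatorname{interior}(\UU)_a$ (which is what disjointness of $C$ from $M(a)$ gives you) forces $C\cap F_a(\overline{U^c})=\emptyset$; this is false once fattening has occurred before time $a$. If $x$ is an interior point of $F_{t_0}(\partial U)$ for some $t_0<a$, then by the inclusion-of-evolving-balls argument (recorded in Remark~\ref{fat_vs_disc_0}) the point $(x,a)$ lies in the spacetime interior of both $\UU$ and the analogous track for $\overline{U^c}$ whenever $a>t_0$ is close to $t_0$. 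A small sphere $C$ around $x$ at time $a$ is then disjoint from $M(a)$ and contained in $\operatorname{interior}(\UU)_a$, yet it sits entirely inside $F_a(\overline{U^c})$. So the ``strengthened hypothesis'' does not rule out the problematic configuration. Your semigroup reduction also silently switches the dual set: restarting from $V:=F_a(U)$ pairs it with $\overline{V^c}=\overline{F_a(U)^c}$, not with $F_a(\overline{U^c})$, and these differ exactly when fattening has occurred. Using $\overline{V^c}$ is in fact the right move, but then the covering property $F_s(V)\cup F_s(\overline{V^c})=N$ would have to be justified for the (possibly fattened, possibly noncompact) set $V=F_a(U)$ in a general Riemannian manifold, and the viscosity-solution representation you invoke is not available at that level of generality.

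The paper's proof avoids the dual flow altogether. For any open spacetime set $G$ it forms $G^*$, the union of spacetime sweepouts of all smooth closed flows whose initial slice lies in $G$; $G^*$ is open, and maximality of the level set flow gives that $G\subseteq\UU$ implies $G^*\subseteq\UU$. Taking $G=\operatorname{interior}(\UU)$ makes $G^*$ an open subset of $\UU$, hence disjoint from $\partial\UU$; since $S(a)\subseteq U(a)\setminus M(a)$ places $S(a)\times\{a\}$ in $G$, the whole sweepout of $S(\cdot)$ lies in $G^*$ and therefore avoids $M(t)$ for all $t\in[a,b]$. This uses only the defining maximality of $F_t$, so no covering property, no viscosity representation, and no case analysis at fattening times are needed---which is precisely what allows the proposition to hold for an arbitrary closed region in an arbitrary Riemannian manifold.
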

\begin{proof}
We must show that if $t\in [a,b]\mapsto S(t)$ is a smooth flow
of connected, closed surfaces with $S(a)$ disjoint from $M(a)$,
then $S(t)$ is disjoint from $M(t)$ for all $t\in [a,b]$.
Trivially, $M(t)$ contains $\partial U(t)$.  Thus either $S(a)\subseteq U(a)\setminus M(a)$
or $S(a)$ is disjoint from $U(a)$.  In the latter case, $S(t)$ is disjoint from $U(t)$
for all $t\in [a,b]$ (since $t\mapsto U(t)$ is a weak set flow)
 and therefore disjoint from $M(t)$ since $M(t)\subseteq U(t)$.
Thus it suffices to prove it when $S(a)\subseteq U(a)\setminus M(a)$.
If $G$ is a relatively open subset of spacetime $N\times[0,\infty)$, let
$G^*$ be the union of all spacetime sweepouts of smooth flows
\[
  t\in [c,d] \mapsto \Sigma(t)
\]
of smooth closed surfaces such that $\Sigma(c)\times \{c\} \subseteq G$.
Note that $G^*$ is relatively open in $N\times[0,\infty)$.
By definition of $F_t$, if $G\subseteq \UU$,
then $G^*\subseteq \UU$.
In particular, letting $G$ be the interior (relative to $N\times[0,\infty)$)
of $\UU$, we see
that $G^*$ is a relatively open subset of $N\times[0,\infty)$
contained in $\UU$, and thus that $G^*$
is disjoint from $\partial \UU$.  By definition of $G^*$, the spacetime
sweepout of $t\in [a.b]\mapsto S(t)$ is contained in $G^*$, so $S(t)$ is disjoint
from $M(t)$ for all $t\in [a,b]$.
\end{proof}
\section{The outermost Brakke flow}
In this section, we prove a theorem (Theorem~\ref{forward-regularity-theorem})
 that, in certain situations, allows one to deduce regularity from backward regularity.
We will need the following basic facts about level set flow:
\begin{lemma}\label{properties-lemma}\qquad
\begin{enumerate}
\item\label{lemma-nested-item}
 If $U_1\supseteq U_2 \supseteq\dots$ are compact sets, then $F_t(\cap_iU) = \cap_i F_t(U_i)$.
\item\label{set-avoidance-item}{\rm [Set avoidance]}
If $K$ is compact, $C$ is closed, and $K$ and $C$ are disjoint, then $F_t(K)$ and $F_t(C)$
are disjoint for all $t>0$.  
\item\label{varifold-avoidance-item} {\rm [Varifold avoidance]} 
If $t\in [0,\infty)\mapsto \mu(t)$ is a Brakke flow of $n$-varifolds in $\RR^{n+1}$, then the spacetime
support of the flow is contained in the set
\[
  \{(x,t): t\ge 0, \, x\in F_t(\spt(\mu(0)) \}
\]
Equivalently, if $\spt(\mu(0))$ is disjoint from a compact set $K$, then the spacetime support of the
flow $t\mapsto \mu(t))$ is disjoint from
\[
  \{(x,t): t\ge 0, \, x\in F_t(K)\}.
\]
\end{enumerate}
\end{lemma}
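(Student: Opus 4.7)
The plan is to prove the three parts in order, with later parts leveraging earlier ones together with the strong maximum principle (Theorem~\ref{maximum-principle-theorem}) and the classical barrier principle for Brakke flows.

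For part~\eqref{lemma-nested-item}, the inclusion $F_t(\bigcap_i U_i)\subseteq \bigcap_i F_t(U_i)$ is immediate from monotonicity: for each $j$, $\bigcap_i U_i\subseteq U_j$, so the weak set flow $t\mapsto F_t(\bigcap_i U_i)$ has initial data contained in $U_j$, and maximality of $F_t(U_j)$ yields the inclusion. For the reverse, I would verify that $t\mapsto \bigcap_i F_t(U_i)$ is itself a weak set flow starting from $\bigcap_i U_i$; maximality of $F_t(\bigcap_i U_i)$ then gives the opposite containment. The spacetime sweepout is closed as an intersection of closed sets. For the avoidance condition, let $t\in[a,b]\mapsto S(t)$ be a smooth MCF of compact hypersurfaces with $S(a)\cap \bigcap_i F_a(U_i)=\emptyset$. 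Each $F_a(U_i)$ is compact (by monotonicity it is contained in the shrinking-ball level set flow of a large enclosing closed ball), so the decreasing sequence of compact sets $S(a)\cap F_a(U_i)$ has empty intersection; by the finite intersection property, some $S(a)\cap F_a(U_{i_0})$ is empty. Since $F_t(U_{i_0})$ is a weak set flow, $S(t)\cap F_t(U_{i_0})=\emptyset$ on $[a,b]$, and hence $S(t)\cap \bigcap_i F_t(U_i)=\emptyset$.

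Part~\eqref{set-avoidance-item} is first handled in the compact case. If $K$ and $C$ are both compact and disjoint, the spacetime sweepouts of $t\mapsto F_t(K)$ and $t\mapsto F_t(C)$ are bounded and closed, their intersection is a compact subset of $\RR^{n+1}\times(0,T]$ (empty at $t=0$), and Theorem~\ref{maximum-principle-theorem}, applied inside a large open ball, yields disjointness. To extend to closed (possibly unbounded) $C$, fix $T>0$ and use sphere avoidance to find $R$ with $F_t(K)\subseteq \BB(0,R)$ for $t\in[0,T]$; set $C':=C\cap\overline{\BB(0,2R)}$, which is compact, and apply the compact case to get $F_t(K)\cap F_t(C')=\emptyset$. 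It remains to show $F_t(C)\cap \BB(0,R)\subseteq F_t(C')$ for small $t$. I would argue this by combining: (i)~the sphere barrier $t\mapsto \partial \BB(0,\sqrt{(2R)^2-2nt})$, which forces any weak set flow starting in $C\setminus \overline{\BB(0,2R)}$ to stay outside $\BB(0,R)$ for short time; and (ii)~part~\eqref{lemma-nested-item} applied to the decreasing compact truncations $C_k:=C\cap \overline{\BB(0,R_k)}$ with $R_k\uparrow \infty$, which pins down $F_t(C)\cap\BB(0,R)$ as the local level set flow of $C'$. A standard iteration over short time intervals extends the conclusion to all $t>0$.

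For part~\eqref{varifold-avoidance-item}, set $\Sigma(t):=\spt(\mu(t))$ and $\MM:=\{(x,t):t\ge 0,\,x\in \Sigma(t)\}$, which is closed by definition of the spacetime support of a Brakke flow. I would show that $t\mapsto\Sigma(t)$ is a weak set flow; maximality of the level set flow then gives $\Sigma(t)\subseteq F_t(\Sigma(0))\subseteq F_t(\spt(\mu(0)))$. The only nontrivial ingredient is avoidance: if $t\in[a,b]\mapsto S(t)$ is a smooth MCF of compact hypersurfaces with $S(a)\cap\Sigma(a)=\emptyset$, then by the classical barrier principle for Brakke flows (a smooth compact MCF initially disjoint from the support of a Brakke flow remains disjoint from it), $S(t)\cap\Sigma(t)=\emptyset$ on $[a,b]$. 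The principal obstacle in the whole lemma is the non-compact reduction in part~\eqref{set-avoidance-item}: because the level set flow of a union generally only contains, rather than equals, the union of the individual level set flows, the localization $F_t(C)\cap \BB(0,R)\subseteq F_t(C')$ must be argued by sphere barriers together with the nested-intersection identity from part~\eqref{lemma-nested-item}.
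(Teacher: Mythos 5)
Your argument for part~\eqref{lemma-nested-item} is correct and amounts to a careful unwinding of what the paper dismisses as ``immediate from the definition''; in particular the finite-intersection-property step in the avoidance check (using that each $F_a(U_i)$ is compact) is exactly the right observation.

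For part~\eqref{set-avoidance-item}, your compact case is fine and is precisely what the paper means by calling this a special case of Theorem~\ref{maximum-principle-theorem}. But your reduction of the unbounded case to the compact one has a genuine gap. You want $F_t(C)\cap \BB(0,R)\subseteq F_t(C')$ for $C'=C\cap\overline{\BB(0,2R)}$, and you propose to obtain it from a sphere barrier together with part~\eqref{lemma-nested-item} applied to $C_k:=C\cap\overline{\BB(0,R_k)}$, $R_k\uparrow\infty$. However, those $C_k$ are an \emph{increasing} sequence, so part~\eqref{lemma-nested-item}, which concerns a decreasing intersection, says nothing about them; for increasing sets one only gets the unhelpful inclusion $\bigcup_k F_t(C_k)\subseteq F_t(C)$. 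Likewise the sphere barrier controls $F_t(C\setminus\overline{\BB(0,2R)})$, not the joint flow $F_t(C)$, and since the level set flow of a union may strictly contain the union of level set flows (as you yourself flag), this does not yield the needed localization. The detour is also unnecessary: Theorem~\ref{maximum-principle-theorem} applies directly even when $C$ is unbounded. Choose $R'$ with $F_t(K)\subseteq\BB(0,R'/2)$ for $t\in[0,T]$ and set $U=\BB(0,R')$. The restriction of a weak set flow in $\RR^{n+1}$ to the open set $U$ is a weak set flow in $U$ (a smooth closed test flow inside $U$ disjoint from $M(a)\cap U$ is disjoint from all of $M(a)$, hence remains disjoint). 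Taking $\MM_1$ to be the spacetime track of $F_t(K)$ and $\MM_2$ that of $F_t(C)\cap U$, the set $\overline{\MM_1}\cap\overline{\MM_2}$ lies in $\overline{\BB(0,R'/2)}\times[0,T]$ and is empty at $t=0$ (since $K\cap C=\emptyset$), hence is a compact subset of $U\times(0,T]$; the theorem then gives disjointness immediately.

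For part~\eqref{varifold-avoidance-item}, the outline (show $t\mapsto\spt\mu(t)$ is a weak set flow and conclude by maximality of $F_t$) is the right one, but the ``classical barrier principle for Brakke flows'' that you invoke is precisely the nontrivial content here; it is exactly what the paper's citation to \cite{Ilmanen}*{10.7} supplies, so your sketch replaces a citation with an appeal to the same unproved fact rather than giving an argument.
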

 \begin{proof} 
   Assertion~\eqref{lemma-nested-item}
   follows immediately from the definition.  
   Assertion~\eqref{set-avoidance-item} is a special
   case of Theorem~\ref{maximum-principle-theorem}.
    See~\cite{Ilmanen}*{10.7} for~\eqref{varifold-avoidance-item}.
   
 \end{proof}
 
\begin{theorem}\label{two-defs-theorem}
Suppose $M$ is a smoothly embedded, closed hypersurface.
Let $U$ be the compact region it bounds, and let $t\mapsto M(t)$
be the outer flow for $M$ (see~\eqref{eq:definition-inner-outer}).
Then for $T>0$,
\[
  M(T) = \lim_{\tau\uparrow T}\partial F_\tau(U).
\]
\end{theorem}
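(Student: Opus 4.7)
The plan is to interpret $M(T) = \lim_{\tau\uparrow T}\partial F_\tau(U)$ as Hausdorff convergence of closed subsets inside the fixed compact region containing the flow, and to establish it by proving two one-sided statements: for every $\epsilon>0$ there exists $\delta>0$ such that for all $\tau\in(T-\delta,T)$ one has (i) $\partial F_\tau(U)\subseteq N_\epsilon(M(T))$ and (ii) $M(T)\subseteq N_\epsilon(\partial F_\tau(U))$.

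Inclusion (i) is the easy direction. I argue by contradiction: if $\tau_n\uparrow T$ and $y_n\in\partial F_{\tau_n}(U)$ with $\dist(y_n,M(T))\ge\epsilon$, a subsequence $y_n\to y$ exists by compactness. Because $\partial U(\tau)\subseteq M(\tau)$, the spacetime points $(y_n,\tau_n)$ lie in the closed set $\partial\UU$; passing to the limit gives $(y,T)\in\partial\UU$, i.e.\ $y\in M(T)$, contradicting $\dist(y_n,M(T))\ge\epsilon$.

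Inclusion (ii) is the main content. Suppose for contradiction that there exist $\tau_n\uparrow T$ and $y_n\in M(T)$ with $B(y_n,\epsilon)\cap\partial U(\tau_n)=\emptyset$. Using compactness of $M(T)$, pass to $y_n\to y\in M(T)$, so that $B(y,\epsilon/2)\cap\partial U(\tau_n)=\emptyset$ for $n$ large. Since $U(\tau_n)$ is closed and the ball is connected, after a further subsequence one of two alternatives holds: either (A) $B(y,\epsilon/2)\subseteq\interior U(\tau_n)$, or (B) $B(y,\epsilon/2)\cap U(\tau_n)=\emptyset$. In case (A), $\overline{B(y,\epsilon/4)}\subseteq U(\tau_n)$; monotonicity of the level set flow combined with the explicit flow of balls (extinction time $\epsilon^2/(32n)$, which exceeds $T-\tau_n$ by a fixed margin for large $n$) produces $B(y,\eta)\times(\tau_n,T+\delta)\subseteq\UU$ for some $\eta,\delta>0$, placing $(y,T)$ in $\interior\UU$ and contradicting $(y,T)\in\partial\UU$. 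In case (B), $\overline{B(y,\epsilon/4)}$ and $U(\tau_n)$ are disjoint closed sets, so by set avoidance (Lemma~\ref{properties-lemma}\,(2)) and the semigroup property $F_t(U(\tau_n))=U(\tau_n+t)$, the shrinking ball stays disjoint from $U(\tau_n+t)$ until extinction; taking $t=T-\tau_n$ (small for large $n$) yields a nonempty ball around $y$ disjoint from $U(T)$, hence $y\notin U(T)$, contradicting $M(T)\subseteq U(T)$ (which holds because $\partial\UU\subseteq\UU$).

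The main obstacle is case (B) in the lower-semicontinuity step: the point $y$ lies in $M(T)\subseteq U(T)$, yet along the approximating times $\tau_n$ a whole spatial ball around $y$ appears to sit entirely outside $U(\tau_n)$. The resolution rests on a scale comparison---the extinction time of an evolving ball depends only on its radius, while $T-\tau_n\to 0$---so that avoidance of the evolving ball propagates past time $T$ and forces the desired contradiction with $y\in U(T)$.
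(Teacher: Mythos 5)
Your proposal is correct and follows essentially the same route as the paper. Your easy inclusion (i) uses the same facts ($\partial F_\tau(U)\subseteq M(\tau)$, closedness of $\partial\UU$), and in the harder inclusion (ii) your dichotomy of cases (A) and (B) on a fixed ball around the limit point $y$ — ball entirely in $\interior U(\tau_n)$ forces $(y,T)\in\interior\UU$, ball entirely disjoint from $U(\tau_n)$ forces $y\notin U(T)$ via avoidance, and the extinction time of the evolving ball exceeds $T-\tau_n$ for $n$ large — is exactly the paper's trichotomy argument (the third alternative, ball meets $\partial F_\tau(U)$, being excluded by the contradiction hypothesis), merely phrased with the explicit shrinking‑sphere formula in place of the paper's "choose $\tau$ so that $x\in\interior F_{T-\tau}(\BB)$."
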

\begin{proof}
For every $\eps>0$, $\partial F_\tau(U)$ lies in the $\eps$-neighborhood
of $M(T)$ for all $\tau<T$ sufficiently close to $T$ since $\partial F_\tau(U)$
is contained in $M(\tau)$ and since $t\mapsto M(t)$ traces out a closed subset of spacetime.
Conversely, we claim that $M(T)$ lies in the $\eps$-neighborhood of $\partial F_\tau(U)$
for all $\tau<T$ sufficiently close to $T$.  For suppose not. Then
there is a sequence $\tau_i\uparrow T$, a point $x\in M(T)$, and an $r>0$ such that
\begin{equation}\label{eq:away}
   \dist(x, \partial F_{\tau_i}U) > r > 0
\end{equation}
for all $i$.  Let $\BB$ be the closed ball of radius $r$ centered at $x$.
Fix a $\tau=\tau_i$ sufficiently close to $T$ that
 $x$ is in the interior
of $F_{T-\tau}(\BB)$.  Thus $(x,T)$ is in the interior
of the spacetime track $\mathcal{B}$ of $t\in [\tau,\infty)\mapsto F_{t-\tau}(\BB)$.
Note that $\BB$ cannot be contained in $F_\tau(U)$, since then $\mathcal{B}$ would
be in $\UU$,  so $(x,T)$ would be in the interior of $\UU$, 
which is impossible since $(x,T)\in \partial \UU$.
Likewise, $\BB$ cannot be disjoint from $F_\tau(U)$, since otherwise
 $F_{T-\tau}(\BB)$ would be disjoint from $F_T(U)$, which is impossible
since $x\in F_T(\BB) \cap M(T)$ and since $M(T)\subseteq F_T(U)$.
Since $\BB$ is not contained in $F_\tau(U)$ or its complement, it must contain
a point in $\partial F_\tau(U)$, contradicting~\eqref{eq:away}.
\end{proof}
 
In the following theorem, we assume that $U\subseteq\RR^{n+1}$ is a compact region with
 $\Hh^n(\partial U)<\infty$.
We choose compact regions $U_i$ with smooth boundaries such that
\begin{enumerate}
\item For each $i$, $U_{i+1}$ is contained in the interior of $U_i$.
\item $\cap U_i=U$.
\item $\sup_i\Hh^n(\partial U_i)<\infty$.
\end{enumerate}
By perturbing each $U_i$ slightly, we can also assume that
\begin{enumerate}
\setcounter{enumi}{4}
\item $\partial U_i$ never fattens.
\end{enumerate}
By passing to a subsequence, we can assume that the measures $\Hh^n\llcorner \partial U_i$ converge
weakly to a radon measure $\mu$.  Of course $\mu$ is supported in $\partial U$.
We can also assume that
\begin{enumerate}
\setcounter{enumi}{5}
\item\label{one-two-item}
If $W$ is an open set and $W\cap \partial U$ is a smooth, connected $n$-manifold, 
then $\mu$ coincides in $W$ with $\Hh^n\llcorner \partial U$ or with $2\,\Hh^n\llcorner \partial U$
 according to whether $W\cap \partial \Omega$ is or is not contained
in the closure of the interior of $U$.
\end{enumerate}
We achieve~\eqref{one-two-item} by choosing the $U_i$ so that $W\cap U_i$ is smooth and converges smoothly
to $W\cap\partial U$.  Note that the convergence is with multiplicity $1$ or $2$ according to whether $W\cap \partial U$
is or is not in the closure of the interior of $U$.
\begin{theorem}\label{outer-flow-theorem}
There is an integral Brakke flow $t\in[0,\infty)\mapsto \mu(t)$ 
such that $\mu(0)=\mu$ and such that the spacetime support of the flow
is the spacetime set swept out by $t\in[0,\infty)\mapsto M(t)$,
where $t\mapsto M(t)$ is the outer flow for $M$ (see~\eqref{eq:definition-inner-outer}).
That is, for $t>0$, the Gauss density of the flow $\mu(\cdot)$ at $(x,t)$ is $>0$ if and only if 
$x\in M(t)$.
\end{theorem}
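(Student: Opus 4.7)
The plan is to construct the desired flow as a subsequential Brakke-compactness limit of the integral Brakke flows attached to the smooth, non-fattening approximations $\partial U_i$. For each $i$, Ilmanen's elliptic regularization produces an integral Brakke flow $t\mapsto \mu_i(t)$ with $\mu_i(0) = \Hh^n\llcorner \partial U_i$, and whose spacetime support coincides with $\partial \UU_i$, where $\UU_i := \{(x,t) : x\in F_t(U_i)\}$; this last identification is the matching-Brakke-flow property for non-fattening smooth initial surfaces. The uniform area bound $\sup_i \Hh^n(\partial U_i) < \infty$, together with the fact that the $\partial U_i$ lie in a common bounded region of spacetime, permits an application of Ilmanen's compactness theorem for integral Brakke flows, yielding a subsequential limit $t\mapsto \mu(t)$. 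The initial time slice is the weak measure limit of $\Hh^n\llcorner \partial U_i$, which is $\mu$ by construction.

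It then suffices to identify the spacetime support $\Mm$ of $\mu(\cdot)$ with $\partial \UU$. For the inclusion $\Mm \subseteq \partial \UU$, any spacetime point $(x_0,t_0) \notin \partial \UU$ has a compact neighborhood $N$ contained entirely in $\operatorname{interior}(\UU)$ or in $\operatorname{interior}(\UU^c)$. In the first case, $\UU \subseteq \UU_i$ forces $N \subseteq \operatorname{interior}(\UU_i)$ for every $i$, so $N$ avoids $\partial \UU_i = \spt(\mu_i)$. In the second case, Lemma~\ref{properties-lemma}\,\eqref{lemma-nested-item} gives $\UU = \cap_i \UU_i$, and compactness of $N$ in the decreasing family $\UU_i$ yields $N \cap \UU_i = \emptyset$ for large $i$, again forcing $N$ to avoid $\spt(\mu_i)$. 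In either case $\mu|_N = 0$, so $(x_0,t_0)\notin \Mm$.

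For the reverse inclusion $\partial \UU \subseteq \Mm$, fix $(x_0,t_0) \in \partial \UU$. Then $(x_0,t_0) \in \UU \subseteq \UU_i$ for all $i$, while points of $\UU^c = \cup_i \UU_i^c$ lie arbitrarily close to $(x_0,t_0)$. Since the sets $\UU_i^c$ are nested and increase to $\UU^c$, the spacetime distance from $(x_0,t_0)$ to $\UU_i^c$ tends to $0$; a spacetime segment argument from $(x_0,t_0)\in\UU_i$ to a nearby point of $\UU_i^c$ produces $(y_i,s_i) \in \partial \UU_i = \spt(\mu_i)$ with $(y_i,s_i) \to (x_0,t_0)$. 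To upgrade this to $(x_0,t_0) \in \spt(\mu)$, I invoke Huisken's monotonicity formula together with the universal lower density bound $\Theta \geq 1$ at every support point of an integral Brakke flow: each $(y_i,s_i)$ has backward Gaussian density at every scale at least $1$, and this lower bound is preserved by weak varifold convergence, so the limit flow has Gaussian density at least $1$ at $(x_0,t_0)$.

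The main obstacle I expect is this last step. Weak Brakke-flow convergence is a priori compatible with the support of $\mu$ being strictly smaller than the Hausdorff limit of the $\spt(\mu_i)$, and it is precisely the monotonicity-based lower density bound that rules out such collapse. The secondary technical input --- the matching-Brakke-flow identification $\spt(\mu_i) = \partial \UU_i$ --- also rests nontrivially on Ilmanen's elliptic regularization combined with the non-fattening of each $\partial U_i$, and is what tethers the supports of the approximating flows to the spacetime boundaries $\partial \UU_i$ rather than to strictly smaller sets.
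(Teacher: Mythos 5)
Your construction of $\mu$ by elliptic regularization of the $\partial U_i$ followed by Brakke compactness is exactly the paper's approach, and your argument for the easy inclusion $\Mm\subseteq\partial\UU$ is essentially the same modulo one detail (the paper uses the spacetime-disjointness of $\spt(\mu_i)\subseteq\MM_i$ from $\UU$, via varifold avoidance plus set avoidance, rather than disjointness from the interior of $\UU_i$; but the conclusion is the same).

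The gap is in the hard inclusion $\partial\UU\subseteq\Mm$. You produce points $(y_i,s_i)\in\partial\UU_i$ converging to $(x_0,t_0)$, but the lower Gaussian density bound you invoke applies only to points of $\spt(\mu_i)$, so you need $\partial\UU_i\subseteq\spt(\mu_i)$. You assert this as ``the matching-Brakke-flow identification $\spt(\mu_i)=\partial\UU_i$,'' but that is not what Ilmanen's matching property says. Matching gives $\spt(\mu_i(t))=F_t(\partial U_i)$ for a.e.\ $t$, i.e.\ the spacetime support of $\mu_i$ equals the spacetime track of the \emph{level set flow} $F_t(\partial U_i)$, which contains $\partial\UU_i$ but need not equal it. Identifying those two sets would require knowing that the outer flow $M_i(t)=\{x:(x,t)\in\partial\UU_i\}$ coincides with $F_t(\partial U_i)$ for each $i$, i.e.\ $T_\textnormal{disc}=\infty$ for $\partial U_i$; non-fattening gives $T_\textnormal{fat}=\infty$ but the equality $T_\textnormal{disc}=T_\textnormal{fat}$ is precisely the open conjecture stated in the paper. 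In other words, the inclusion you need is essentially the theorem under proof applied to the approximants, and you have not proved it. The paper circumvents this entirely: it only uses the one-sided Ilmanen containment $\partial^*F_t(U_i)\subseteq\spt(\mu_i(t))$ (reduced boundary), upgrades it to $\partial\bigl(\operatorname{interior}F_t(U_i)\bigr)\subseteq V_i(t)$ via Giusti's identity that the closure of the reduced boundary equals the boundary of the interior, then runs a limiting argument using Lemma~\ref{properties-lemma}\,\eqref{lemma-nested-item} to obtain $\partial F_t(U)\subseteq V(t)$, and finally passes from $\partial F_t(U)$ to $M(t)$ via $M(t)=\lim_{\tau\uparrow t}\partial F_\tau(U)$ (Theorem~\ref{two-defs-theorem}). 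The reduced-boundary route is the substance of the proof, and it is missing from your argument.
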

\begin{proof}
Let $\UU$  and $\MM_i$ be the spacetime tracks of the flows
$t\mapsto F_t(U)$ and $t\mapsto F_t(\partial U_i)$.
By definition, $\partial \UU$ is the spacetime track of $t\mapsto M(t)$.
Using elliptic regularization, we can find integral
Brakke flows $t\in [0,\infty)\mapsto \mu_i(t)$ starting from $\partial U_i$.
(That is, the initial Radon measure is $\Hh^n\llcorner \partial U_i$.) 
By passing to a subsequence, we can assume that they converge to an integral 
Brakke flow $t\mapsto \mu(t)$ with $\mu(0)=\mu$.
Let $\VV_i$ and $\VV$ be the spacetime supports of the flows $t\mapsto \mu_i$ and
$t\mapsto \mu(t)$.
By Lemma~\ref{properties-lemma}\,\eqref{varifold-avoidance-item}, 
\begin{equation}\label{eq:V-in-U}
  \VV\subseteq \UU.
\end{equation}
By the same lemma, $\VV_i\subseteq \MM_i$, which is disjoint from $\UU$ 
(by Lemma~\ref{properties-lemma}\,\eqref{set-avoidance-item})
 and therefore disjoint from $\interior(\UU)$.   Passing to the limit,
$\VV$ is disjoint from the interior of $\UU$, so by~\eqref{eq:V-in-U},
\[
  \VV\subseteq \partial \UU.
\]
Because it comes from elliptic regularization (and because $\partial U_i$ does not fatten),
the Brakke flow $\mu_i(\cdot)$ has the following property:
for every $t>0$ \cite[11.2,11.4]{Ilmanen}, 
\begin{equation}\label{eq:reduced-containment}
 \partial^*(F_t(U_i)) \subseteq \spt(\mu_i(t)) \subseteq V_i(t).
\end{equation}
For any closed set of finite perimeter, the closure of the reduced boundary is
equal to the boundary of the interior \cite[Theorem~4.4]{Giusti_MS_book}. 
Thus~\eqref{eq:reduced-containment} implies
\[
  \partial ({\rm interior}(F_t(U_i)) \subseteq V_i(t).
\]
Now suppose that $x\in \partial F_t(U)$.  Then $x$ is the interior of $F_t(U_i)$ for all $i$.
Let $\eps>0$.
By assertion~\eqref{lemma-nested-item} of Lemma~\ref{properties-lemma},  for all sufficiently large $i$, $\BB(x,\eps)$ contains
a point not in $F_t(U_i)$.  Thus $\BB(x,\eps)$ contains a point in $\partial({\rm interior}(F_t(U_i))$
and therefore in $V_i(t)$. 
Letting $i\to \infty$, we see that $\overline{\BB(x,\eps)}$ contains a point in $V(t)$.
Since $\eps>0$ is arbitrary, $x\in V(t)$.  We have shown that $\partial F_t(U)\subseteq V(t)$
for all $t$.
Since $M(t) = \lim_{\tau\uparrow t}\partial F_\tau(U)$, it follows that $M(t)\subseteq V(t)$.
\end{proof}
The Brakke flow constructed in Theorem~\ref{outer-flow-theorem} has an additional property
called unit regularity:
\begin{definition}\label{unit-regular-definition}
A {\bf unit-regular} Brakke flow is an integral Brakke flow such that every
spacetime point of Gaussian density one is regular (and not just backwardly regular).
\end{definition}
In arbitrary integral Brakke flows, spacetime points of Gauss density  $1$
may fail to be regular because of sudden vanishing.  
For example, in  a non-moving, multiplicity-one plane $P$ that
vanishes at time $T$, the points $(x,T)$ with $x\in P$ are all backwardly regular but not
regular.
\begin{theorem}\label{unit-regular-theorem}
The Brakke flow constructed in the proof of Theorem~\ref{outer-flow-theorem} is unit-regular.
\end{theorem}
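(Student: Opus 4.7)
The plan is to deduce unit regularity of the limit flow $\mu$ from the unit regularity of the approximating elliptic regularization flows $\mu_i$, together with a uniform-in-$i$ application of White's local regularity theorem near any spacetime point with Gauss density close to one, followed by a smooth short-time extension forward.

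First I would verify that each approximating flow $\mu_i$ is itself unit-regular. Since $\mu_i$ comes from elliptic regularization of the smooth, non-fattening hypersurface $\partial U_i$, and since by the construction in Theorem~\ref{outer-flow-theorem} the spacetime support of $\mu_i$ coincides with the outer spacetime boundary of the level-set flow of $\partial U_i$, sudden vanishing at a density-one point cannot occur: such a point is backwardly regular by Brakke/White regularity, the backward smooth piece extends forward by short-time existence, and the extension locally separates the two level-set flows $F_t(U_i)$ and $F_t(\overline{U_i^c})$ by the avoidance arguments of the appendix, hence must lie in $\spt(\mu_i)$ with multiplicity one.

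Next I would pass unit regularity to the limit. Let $(x_0,t_0)$ have Gauss density exactly one in $\mu$. By the lower bound of one for integral Brakke flows on their support and the upper semicontinuity of Gaussian density under Brakke-flow convergence, there exist $(x_i,t_i)\to (x_0,t_0)$ with $(x_i,t_i)\in \spt(\mu_i)$ and $\Theta(\mu_i,x_i,t_i)\to 1$. By Huisken's monotonicity and the continuity of density ratios in Brakke limits, there is a uniform scale $R>0$ such that every density ratio of $\mu_i$ in the backward parabolic cylinder $P(x_i,t_i,R)$ is below $1+\varepsilon_0$ for all large $i$, where $\varepsilon_0$ is the threshold in White's local regularity theorem. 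That theorem then provides uniform $C^k$ estimates for $\mu_i$ as a smooth multiplicity-one MCF in $P(x_i,t_i,R/2)$.

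Finally I would extend uniformly forward. Using the unit-regularity of $\mu_i$ established in the first step, together with short-time existence for smooth MCF applied to the uniform $C^k$-bounded initial data at time $t_i$, each $\mu_i$ is a smooth multiplicity-one MCF on a forward parabolic cylinder of the form $\BB(x_i,R/4)\times (t_i-R^2/16,\,t_i+\tau)$ for some $\tau>0$ independent of $i$. Passing to the limit in $C^\infty$ on compact subsets produces a smooth multiplicity-one MCF in a full two-sided spacetime neighborhood of $(x_0,t_0)$ that agrees with $\mu$ there, so $(x_0,t_0)$ is regular.

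The main obstacle I expect is the uniform lower bound $\tau>0$ on the forward smoothness time for the $\mu_i$: this is where unit regularity of the approximating flows is essential (to rule out sudden vanishing on any small forward interval), and it must be combined carefully with White's uniform backward estimates and standard MCF short-time existence to ensure all the smooth extensions live on a common forward parabolic cylinder before passing to the Brakke-flow limit.
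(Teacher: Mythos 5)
Your proposal takes a genuinely different (and much longer) route than the paper. The paper's proof is essentially one paragraph and rests on two cited facts: (i) translators for mean curvature flow are unit-regular Brakke flows because they are stationary integral varifolds for Ilmanen's metric, so Allard's theorem applies; and (ii) the class of unit-regular integral Brakke flows is closed under weak convergence, which is Theorem~4.2 of White--Schulze and is itself a consequence of White's local regularity theorem. Since elliptic regularization produces flows that are limits of translating flows, each $\mu_i$ is unit-regular, and hence so is the limit $\mu$. You instead try to verify unit-regularity of each $\mu_i$ by hand (via non-fattening of $\partial U_i$, backward regularity, smooth short-time extension, and level-set-flow separation), and then to re-derive the closure-under-limits theorem directly. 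Your step~1 argument is plausible and does avoid the circularity one might worry about (you do not invoke Theorem~\ref{forward-regularity-theorem}, whose proof uses the very statement at hand), but it is considerably more involved than the translator observation, and it implicitly requires re-running the $\partial^*F_t(U_i)\subseteq\spt\mu_i(t)$ containment for each $U_i$.

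The genuine gap is in your steps~2--4, precisely at the point you flag as the main obstacle. You need a forward existence time $\tau>0$ independent of $i$, and the justification offered (unit-regularity of $\mu_i$ plus smooth short-time existence from $C^k$-bounded data at $t_i$) does not close the argument as stated. Unit-regularity is a pointwise statement: it tells you that each density-one point of $\mu_i$ has \emph{some} two-sided regular neighborhood, but it gives no uniform lower bound on its size. Smooth short-time existence produces a local classical flow from the time-$t_i$ slice, but it does not by itself show that $\mu_i$ coincides with that classical flow on a forward cylinder of $i$-independent size; a priori $\mu_i$ could develop a high-density point or interact with mass entering from outside $\BB(x_i,R/2)$ at times $t>t_i$ arbitrarily soon after $t_i$. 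The standard way to obtain the uniform $\tau$ is to use Huisken monotonicity to transport the backward density control forward a definite amount (density ratios at a fixed scale $r_0$ at time $t_i+s$ are controlled by the smooth backward slice at time $t_i+s-r_0^2$ as long as $s<r_0^2/2$), apply White's local regularity on that forward region, and only then use unit-regularity of $\mu_i$ to exclude sudden vanishing. That combination is essentially the content of the cited White--Schulze theorem, and without it your argument does not yield a uniform $\tau$. Given that the needed statement already exists in the literature, citing it, as the paper does, is the cleaner and safer path; if you want a self-contained proof you should spell out the monotonicity/forward-transport step explicitly rather than attributing the uniformity to unit-regularity alone.
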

\begin{proof}
Let $\Cc$ be the class of unit-regular Brakke flows.
  By Allard's theorem, this class includes translators
for mean curvature flow,
 since such translators are stationary integral varifolds for a certain Riemannian metric.
The local regularity theory of \cite{white_regularity} implies that the class $\Cc$ is closed under weak convergence
of Brakke flows: see \cite{white_schulze}*{Theorem~4.2}. 
Hence all flows obtained by elliptic regularization are unit-regular
(because they are limits of translating flows), as are all limits of such flows.
\end{proof}
\begin{comment}
There is an $\eps>0$ (depending on dimension) such that in any Brakke flow,
if the Gauss density of a point in the support is less than $1+\eps$, then it is $1$.
(See for example \cite{white_schulze}*{Lemma~4.1}.)
Consequently, in any unit-regular Brakke flow, spacetime points of Gauss
density less than $1+\eps$ are regular.
\end{comment}
\begin{theorem}\label{forward-regularity-theorem}
Suppose that $U\subseteq\RR^{n+1}$ is a compact region with $\Hh^n(\partial U)<\infty$, 
and let $t\mapsto M(t)$ be the outer flow for $M=\partial U$ 
(see~\eqref{eq:definition-inner-outer}).
Suppose that $T>0$ and that $p\in M(T)$ is backwardly regular for the flow $M(\cdot)$.
If $p$ is in the closure of the interior of $F_T(U)$, then $(p,T)$ is a regular point of the flow.
\end{theorem}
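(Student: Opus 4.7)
I would invoke the unit-regular integral Brakke flow $\mu$ constructed in Theorems~\ref{outer-flow-theorem} and~\ref{unit-regular-theorem}, for which $\spt(\mu(\tau)) = M(\tau)$. The backward regularity supplies $\eps > 0$ such that $N(\tau) := M(\tau) \cap \BB(p,\eps)$ is a smooth mean curvature flow on $[T-\eps,T]$, separating $\BB(p,\eps)$ into smoothly varying open sides $V^\pm(\tau)$. Integrality of $\mu$ together with smoothness of its support forces $\mu\llcorner\bigl(\BB(p,\eps)\times[T-\eps,T]\bigr) = k\cdot \Hh^n\llcorner N$ for a positive integer $k$ that is constant in time, and parabolic rescaling identifies the tangent flow at $(p,T)$ as a multiplicity-$k$ plane, so $\Theta(\mu,p,T) = k$. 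By the unit-regularity of $\mu$, regularity of $(p,T)$ for the outer flow $M(\cdot)$ follows from $k = 1$, so the entire task reduces to ruling out $k \ge 2$ using the hypothesis.

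To show $k = 1$, I would analyze $F_\tau(U)\cap \BB(p,\eps')$. Since $\partial F_\tau(U)\subseteq M(\tau)$ and $N(\tau)$ is smooth, this trace must take one of four forms: $V^+(\tau)\cup N(\tau)$, $V^-(\tau)\cup N(\tau)$, all of $\BB(p,\eps')$ (``both sides''), or a subset of $N(\tau)$ (``empty interior''). For $\tau < T$ close the ``both sides'' alternative is excluded by Remark~\ref{fat_vs_disc_0}: it would force $p \in \mathrm{int}(\UU)$ via evolving-ball inclusion, contradicting $p\in M(T)\subseteq\partial\UU$. For the elliptic-regularization sequence $\mu_i\to\mu$ from the proof of Theorem~\ref{outer-flow-theorem}, Brakke's local regularity on the smooth window ensures that for large $i$ the restriction $\mu_i\llcorner\bigl(\BB(p,\eps)\times[T-\eps,T]\bigr)$ consists of exactly $k$ multiplicity-one smooth sheets of $\partial F_\tau(U_i)$ clustering uniformly near $N(\tau)$, and the inside/outside labeling of the $k+1$ resulting regions of $\BB(p,\eps)$ is constant in $\tau$ throughout this window. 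Hence the ``case'' at $\tau = T$ coincides with the case at $\tau < T$ close to $T$.

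The hypothesis $p\in\overline{\mathrm{int}(F_T(U))}$ excludes the ``empty interior'' case at $\tau = T$, so by the case-propagation above we are in a one-sided case, say $F_\tau(U)\cap\BB(p,\eps') = V^+(\tau)\cup N(\tau)$ throughout $[T-\eps,T]$. Because the elliptic regularization is intrinsically one-sided ($U_i\supseteq U$ is a thickening into the $V^-$ side at points where $\partial U$ has interior on the $V^+$ side), a one-sided limit region is approximated by $F_\tau(U_i)$ equal to the $V^+$-side plus a single thin collar on the $V^-$ side, so $\partial F_\tau(U_i)\cap\BB(p,\eps')$ carries exactly one sheet near $N(\tau)$, giving $k = 1$ and completing the proof via unit-regularity.

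The main obstacle I foresee is rigorously excluding the higher odd-multiplicity scenarios $k\in\{3,5,\dots\}$, in which the elliptic approximation could in principle develop extra ``middle-strip'' sheets that collapse to $N(\tau)$ in the limit while leaving the limit region $F_\tau(U)$ one-sided. Ruling this out requires propagating the one-sided topology of $\partial U_i$ at time $0$ (where multiplicity one is manifest from property (6) in the construction of $\mu$) forward to $(p,T)$ through the Brakke flow $\mu_i$ for large $i$; here one uses that $\mu_i$ is smooth in the backward window (by Brakke's local regularity) together with the local uniqueness of smooth mean curvature flow, or alternatively invokes the explicit multiplicity-one structure of the elliptic-regularization outer Brakke flow at smooth reduced-boundary points of $F_T(U)$.
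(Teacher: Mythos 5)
Your high-level strategy is right: invoke the unit-regular outer Brakke flow of Theorems~\ref{outer-flow-theorem} and~\ref{unit-regular-theorem}, show the Gaussian density at $(p,T)$ equals $1$, and conclude regularity from unit-regularity. Your one-sidedness case analysis (excluding ``both sides'' via the evolving-ball argument and ``empty interior'' via the hypothesis on $p$) also matches what the paper needs. But you have correctly identified the gap in your own argument: tracing multiplicity through the elliptic-regularization sequence $\mu_i$ does not rule out the limit having an odd multiplicity $k\ge 3$, because extra sheets of $\partial F_\tau(U_i)$ can pair off and collapse onto $N(\tau)$ while the region $F_\tau(U_i)$ stays one-sided.

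The move you are missing is to \emph{restart} the Brakke flow at the backward-regularity time $a$ rather than at time~$0$. From the backward regularity and the hypothesis $p\in\overline{\interior F_T(U)}$, one first argues (after shrinking $W$ so that $W\cap M(t)$ is connected for $t\in[a,T]$) that $W\cap M(t)\subseteq\overline{\interior F_t(U)}$ for the whole interval $[a,T]$. Then applying Theorems~\ref{outer-flow-theorem} and~\ref{unit-regular-theorem} with initial time $a$, property~(6) in the construction of the initial measure forces $\mu(a)\llcorner W=\Hh^n\llcorner(W\cap M(a))$, i.e.\ multiplicity exactly~$1$. On the smooth window, integrality and bounded first variation give $\mu(t)\llcorner W=k(t)\,\Hh^n\llcorner(W\cap M(t))$ with $k(t)$ a nonnegative integer; the spacetime-support property gives $k(t)\ge 1$ a.e.; and the Brakke one-sided continuity~\eqref{eq:left-right-limits} plus $k(a)=1$ then force $k(t)\equiv 1$ on $[a,T]$. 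This makes the Gaussian density at $(p,T)$ equal to~$1$ directly, with no need to count sheets in the approximating flows $\mu_i$ or to propagate the ``case'' through time. Your approach, as written, is not salvageable without some additional multiplicity-one input at the start of the smooth window, which is exactly what restarting at time $a$ provides.
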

\begin{proof}
By hypothesis, there is an open set $W$ containing $p$ and a time $a<T$ such that
$t\in [a,T]\mapsto W\cap M(t)$
is a smooth mean curvature flow of smoothly embedded hypersurfaces.  
By replacing $W$ by a smaller open set 
 and by replacing $[a,T]$ by a smaller time interval  $[a',T]$,
we can assume that $W\cap M(t)$ is connected and nonempty for all $t\in [a,T]$.
It follows that
$W\cap M(t)$ is contained in the closure of the interior of $F_t(U)$ 
for all $t\in [a,T]$.   
Thus we can apply Theorems~\ref{outer-flow-theorem} and~\ref{unit-regular-theorem}
 with $a$ as the initial time to get a
unit-regular Brakke flow
\[
  t\in [a,\infty) \mapsto \mu(t)
\]
such that $\mu(a)$ coincides with $\Hh^n\llcorner M(a)$ in $W$
and such that 
\begin{equation}\label{eq:support}
\text{The spacetime support of the Brakke flow is $\{(x,t): t\ge a, \, x\in M(t)\}$.}
\end{equation}
For almost all $t$, the varifold corresponding to $\mu(t)$ has locally bounded first variation,
which implies that
\begin{equation}\label{eq:coincide}
   \mu(t)\llcorner W = k(t)\, \Hh^n \llcorner (W\cap M(t))
\end{equation}
for some nonnegative integer $k(t)$. By~\eqref{eq:support},
\begin{equation}\label{eq:at-least-one}
  \text{$k(t)\ge 1$ for almost all $t\in [a,T]$}.
\end{equation}
Also, for every $t\ge a$,
\begin{equation}\label{eq:left-right-limits}
    \lim_{\tau\uparrow t} \mu(\tau) \ge \mu(t) \ge \lim_{\tau\downarrow t}\mu(\tau).
\end{equation}
(The limits exist and satisfy the inequality.)
Since $k(a)=1$, we see from~\eqref{eq:coincide},~\eqref{eq:at-least-one}, 
and~\eqref{eq:left-right-limits} that \eqref{eq:coincide} holds with $k(t)=1$
for every $t\in [a,T]$.
Hence the Gaussian density at $(p,T)$ is one.  Since the Brakke flow $t\mapsto \mu(t)$
is unit-regular, $(p,T)$ is a regular point of the flow.
\end{proof}
\section{Additional Results about Inner and Outer Flows}
In this section, we prove that the $M(t)=\partial F_t(U)$  except for countably many $t$
((where $M(\cdot)$ is the outer
flow for $M=\partial U$),
and we prove that for a generic starting surface $M$, the inner and outer flows are the same
(i.e, $T_\textnormal{disc}=\infty$.)
Both proofs are based on the following general fact about metric spaces:
if $X$ is a separable metric space and if $f:X\to\RR$ is continuous, then 
\begin{equation}\label{eq:countable}
\text{$\{f(x): \text{$f$ has a local maximum at $x$}\}$ is countable}.
\end{equation}
\begin{theorem}\label{countable-times-theorem}
Let $U$ be a closed region in $\RR^{n+1}$, 
and let $t\mapsto M(t)$ be the outer flow
for $M=\partial U$ (See~\eqref{eq:definition-inner-outer}).  
Then $M(t)=\partial F_t(U)$ for all but countably
many $t$.
\end{theorem}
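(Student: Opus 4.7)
The plan is to realize every bad time (i.e., every $t$ with $M(t)\ne\partial F_t(U)$) as a local-maximum value of a single continuous function on a separable metric space, and then to invoke fact~\eqref{eq:countable}. Let $\UU$ be the spacetime track of $t\mapsto F_t(U)$, set $\Omega:=\interior(\UU)$ (interior as a subset of $\RR^{n+1}\times[0,\infty)$), and define $X:=\partial\Omega$ with its subspace topology, so that $X$ is separable. Let $f:X\to\RR$ be the continuous function $f(x,t)=t$. Since $\partial F_t(U)\subseteq M(t)$ always holds, the identity $M(t)=\partial F_t(U)$ fails exactly when there is some $x_0\in\interior(F_t(U))$ with $(x_0,t)\in\partial\UU$, equivalently with $(x_0,t)\notin\Omega$.

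For such a bad $(x_0,t_0)$, pick $r_0>0$ with $\overline{B(x_0,r_0)}\subseteq F_{t_0}(U)$. The key ingredient is a forward evolving-ball comparison: using monotonicity of the level set flow, the semigroup property, and the fact that $F_\tau(\overline{B(x_0,r_0)})=\overline{B(x_0,\sqrt{r_0^2-2n\tau})}$ for $\tau\in[0,r_0^2/(2n))$, one obtains
\[
\overline{B\bigl(x_0,\sqrt{r_0^2-2n(s-t_0)}\bigr)}\subseteq F_s(U)\qquad \text{for } s\in[t_0,\, t_0+r_0^2/(2n)).
\]
Taking the topological interior of this solid spacetime cone shows that $(y,s)\in\Omega$ whenever $s>t_0$ and $|y-x_0|<\sqrt{r_0^2-2n(s-t_0)}$. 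In particular $(x_0,s)\in\Omega$ for $s$ slightly greater than $t_0$, forcing $(x_0,t_0)\in\overline{\Omega}\setminus\Omega=X$.

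With this in hand, $(x_0,t_0)$ will be a local maximum of $f$ on $X$: for $\epsilon>0$ small in terms of $r_0$, any $(y,s)\in X$ within spacetime distance $\epsilon$ of $(x_0,t_0)$ with $s>t_0$ would satisfy $|y-x_0|<\sqrt{r_0^2-2n(s-t_0)}$ and hence lie in $\Omega$, contradicting $(y,s)\in X=\partial\Omega$; so every nearby $(y,s)\in X$ has $f(y,s)=s\le t_0=f(x_0,t_0)$. Applying~\eqref{eq:countable} to the continuous $f:X\to\RR$ on the separable $X$, the set of local-maximum values is countable, and it contains every bad time, finishing the proof. The main obstacle is the forward evolving-ball step and the verification that the strict spacetime interior of the solid cone lies in $\Omega$; once that is in place, the local-maximum structure is automatic and the rest is bookkeeping.
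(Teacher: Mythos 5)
Your proof is correct and follows essentially the same strategy as the paper's: both realize each bad time as a local-maximum value of the time coordinate on the spacetime boundary, using a forward evolving-ball (shrinking sphere) barrier, and both conclude via the countability fact~\eqref{eq:countable}. The only cosmetic difference is that you work with $\partial\bigl(\interior(\UU)\bigr)$ and hence need the extra (correct) step verifying the bad point lies in $\overline{\interior(\UU)}$, whereas the paper works directly with $\partial\UU$, where membership of the bad point is immediate.
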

\begin{proof}
As usual, let $\UU$ be the spacetime track of $t\mapsto F_t(U)$, so that $\partial \UU$
is the spacetime track of $t\mapsto M(t)$.
Suppose that $p\in M(T)\setminus \partial F_T(U)$.  Then $p$ is the interior of $F_T(U)$,
so $F_T(U)$ contains a ball $\BB=\BB(p,r)$.  Note that $F_{t-T}(\BB) \subseteq F_t(U)$
for $t>T$.  Thus the time function $(x,t)\mapsto t$ restricted to $\partial \UU$ has a local
maximum at $(p,T)$, so the Theorem~\ref{countable-times-theorem}
is a special case of~\eqref{eq:countable}.
\end{proof}
\begin{theorem}
Let $\phi:\RR^{n+1}\to \RR$ be a proper continuous function.
For all but countably many $s\in\RR$, the inner and outer flows
for $M_s:=\{\phi=s\}$ coincide.
\end{theorem}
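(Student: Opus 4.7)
The plan is to construct a continuous ``arrival function'' $\Phi$ on $\RR^{n+1}\times[0,\infty)$ extending $\phi$, and to characterize the levels $s$ at which the inner and outer flows of $M_s=\{\phi=s\}$ differ as precisely the values attained by $\Phi$ at local extrema. Countability of the bad set then follows from the principle~\eqref{eq:countable}.

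Set $U_s=\{\phi\leq s\}$ and $U_s'=\{\phi\geq s\}$; these agree with the paper's convention $U_s'=\overline{U_s^c}$ except at local extremum values of $\phi$, of which there are countably many by~\eqref{eq:countable} applied to $\phi$, so this choice does not affect the conclusion. Let $\UU_s$, $\UU_s'$ be the spacetime tracks of $t\mapsto F_t(U_s)$ and $t\mapsto F_t(U_s')$. The first step is to establish, from smooth compact avoidance alone: (i) $\UU_s\cup\UU_s'=\RR^{n+1}\times[0,\infty)$; (ii) $\UU_{s_1}\cap\UU_{s_2}'=\emptyset$ whenever $s_1<s_2$; and (iii) $\UU_s=\bigcap_{s'>s}\UU_{s'}$, symmetrically for $\UU_s'$. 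For (i), a point in neither set would be met by two smooth compact MCFs starting in the disjoint open sets $U_s^c$ and $(U_s')^c$, violating avoidance. For (ii), the compactness of $\{\phi=s'\}$ for any $s'\in(s_1,s_2)$ (from properness of $\phi$) provides a compact smooth separating barrier for the same style of argument. For (iii), any smooth compact MCF $S(\cdot)$ with $S(0)\cap U_s=\emptyset$ satisfies $S(0)\subseteq\{\phi\geq s+\delta\}$ for some $\delta>0$ by compactness of $S(0)$ inside the open set $\{\phi>s\}$, so disjointness from $U_s$ transfers to disjointness from $U_{s+\delta/2}$, giving the reverse of the trivial inclusion.

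Define $\Phi(x,t)=\inf\{s:(x,t)\in\UU_s\}$. Properties (i) and (ii) force $\Phi(x,t)=\sup\{s:(x,t)\in\UU_s'\}$: a strict inequality in one direction contradicts (ii), the opposite direction contradicts (i). Property (iii) gives $\{\Phi\leq s\}=\UU_s$ and $\{\Phi\geq s\}=\UU_s'$, so $\Phi$ is simultaneously upper and lower semicontinuous, hence continuous, with $\Phi(\cdot,0)=\phi$. From continuity,
\[
\partial\UU_s=\{\Phi=s\}\cap\overline{\{\Phi>s\}},\qquad \partial\UU_s'=\{\Phi=s\}\cap\overline{\{\Phi<s\}}.
\]
The outer flow $M_s(\cdot)$ and inner flow $M_s'(\cdot)$ coincide as families iff $\partial\UU_s=\partial\UU_s'$; this fails exactly when some point $(x_0,t_0)\in\{\Phi=s\}$ has an entire neighborhood in $\{\Phi\geq s\}$ (a local minimum of $\Phi$ with value $s$) or in $\{\Phi\leq s\}$ (a local maximum). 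Applying~\eqref{eq:countable} to $\Phi$ and to $-\Phi$ on the separable metric space $\RR^{n+1}\times[0,\infty)$, the set of local-extremum values of $\Phi$ is countable, and so is the bad set of $s$.

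The main technical obstacle I anticipate is making items (i)--(iii) rigorous despite the potential non-compactness of $U_s$ and $U_s'$; properness of $\phi$ is precisely what enables this, by supplying compact level sets to serve as smooth barriers for the avoidance arguments. Everything else — continuity of $\Phi$, the explicit formula for the two boundaries, and the reduction to local extremum values — then follows mechanically from monotonicity and from the fact that $\Phi$ is forced to be simultaneously USC and LSC.
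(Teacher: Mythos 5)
Your proof takes essentially the same route as the paper's: define the spacetime arrival function $\Phi$, identify $\UU_s=\{\Phi\le s\}$ and $\UU_s'=\{\Phi\ge s\}$, observe that a point of $\partial\UU_s\setminus\partial\UU_s'$ (resp.\ $\partial\UU_s'\setminus\partial\UU_s$) is necessarily a local minimum (resp.\ maximum) of $\Phi$ with value $s$, and then invoke~\eqref{eq:countable}. The only real difference is that the paper takes the existence, continuity, and level-set properties of $\Phi$ as standard facts from the viscosity-solution construction of the level set flow, whereas you sketch a direct construction via avoidance (your items (i)--(iii)); that sketch is in the right spirit, though those preliminary steps---especially (i), that $\UU_s\cup\UU_s'$ exhausts spacetime, and the non-compactness of $U_s, U_s'$ in (ii)--(iii)---are precisely where the most care would be needed to make the argument self-contained.
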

\begin{proof}
Define $\Phi:\RR^{n+1}\times[0,\infty)\to \RR$ by
\[
          \text{  $\Phi(x,t) = s$ if $x\in F_t(M_s)$.}
\]
Thus $\{\Phi=s\}$ is the spacetime track of $F_t(M_s)$.
Let $U_s=\{\phi\le s\}$ and $U'_s=\{\phi\ge s\}$.
Then $\{\Phi\le s\}$ is the spacetime track $\UU_s$ of $F_t(U_s)$ 
and $\{\Phi\ge s\}$ is the spacetime track $\UU'_s$ of $F_t(U'_s)$.
The assertion of the theorem is that $\partial \UU_s=\partial \UU'_s$
 for all but countable many $s$.
 
Suppose that $(p,t)\in \partial\UU_s\setminus \partial \UU_s'$.
That is, $(p,t)$ is in the boundary of $\{F\le s\}$ but not in the boundary of $\{F\ge s\}$.
Then $F:\RR^{n+1}\times[0,\infty)\to \RR$ has a local minimum at $(p,t)$.
Similarly, $F$ has a local maximum at each point of 
   $\partial \UU_s'\setminus \partial \UU_s$. 
By~\eqref{eq:countable}, $\{s: \partial \UU_s\ne \partial \UU'_s\}$ is countable.
\end{proof}
\bibliography{HershkovitsWhite}
\bibliographystyle{alpha}
  
\end{document}